\newcommand{\K}{\mathcal{K}}
\newcommand{\R}{\mathbb{R}}
\DeclareMathOperator*{\supp}{supp}
\DeclareMathOperator*{\dist}{dist}
\newcommand{\lp}{L^p(\Omega)}
\newcommand{\wspr}{W^{s,p}(\R^n)}
\newcommand{\wsp}{W^{s,p}(\Omega)}
\newcommand{\wwsp}{\widetilde{W}^{s,p}(\Omega)}
\newcommand{\dwsp}{{W}^{-s,p^\prime}(\Omega)}
\newcommand{\mwsp}{\mathcal{W}^{s,p}(\Omega)}
\newtheorem{teo}{Theorem}[section]
\newtheorem{lem}[teo]{Lemma}
\newtheorem{pro}[teo]{Proposition}
\theoremstyle{remark}
\newtheorem{re}[teo]{Remark}
\theoremstyle{definition}
\journal{Journal of Differential Equations}
\begin{document}

\begin{frontmatter}
\title{A Hopf's lemma and a strong minimum principle for the fractional 
$p$-Laplacian}
\author[LDP]{Leandro M. Del Pezzo}
\ead{ldelpezzo@utdt.edu}

\author[AQ]{Alexander Quaas}
\ead{alexander.quaas@usm.cl}
%
\address[LDP]{
CONICET and Departamento  de Matem{\'a}tica y Estadística
Universidad Torcuato Di Tella
Av. Figueroa Alcorta 7350 (C1428BCW)
C. A. de Buenos Aires - Argentina
}

\address[AQ]{
Departamento de Matem{\'a}tica, Universidad 
T\'ecnica Federico Santa Mar\'ia Casilla V-110, Avda. 
Espa\~na, 1680 -- Valpara\'iso, CHILE.}

\begin{abstract}
	Our propose here is to provide a Hopf lemma and a 
	strong minimum principle for weak supersolutions
	of
	\[
		(-\Delta_p)^s u= c(x)|u|^{p-2}u \quad \text{ in } \Omega
	\]
	where $\Omega$ is an open set of $\mathbb{R}^N,$ $s\in(0,1),$
	$p\in(1,+\infty),$ $c\in C(\overline{\Omega})$ 
	and $(-\Delta_p)^s$ is the fractional
	$p$-Laplacian. 
\end{abstract}


\begin{keyword}
Fractional $p$-Laplacian, Hopf's Lemma, 
strong minimum principle.



\end{keyword}

\end{frontmatter}

\section{Introduction}
	
	It is well known that the Hopf's lemma is one of the most useful and 
	best known tool in the study of partial differential equations. 
	Just to name a some of its applications, 
	this lemma is crucial in the proofs of the strong maximum principle,
	and the anti-maximum principle and in the moving plane method.
	For a review on the topic in the local case, 
	see for instance \cite{MR2025185,MR2356201} 
	and the references therein. 
	\medskip
	
	Our propose here is to provide a Hopf lemma 
	and a strong minimum principle for the fractional $p$-Laplacian
	\[
		(-\Delta_p)^su(x)\coloneqq
		2\mathcal{K}(s,p,N)
		\lim_{\varepsilon\to0^+}
		\int_{\R^N\setminus B_\varepsilon(x)}\!\!\!\!\!\!\!\!\!\!\!\!\!\!
		\!\!\!\!\dfrac{|u(x)-u(y)|^{p-2}(u(x)-u(y))}{|x-y|^{N+sp}}dy\,
		x\in\R^N
	\]
	where $p\in(1,\infty),$ $s\in(0,1),$ and 
	$\mathcal{K}(s,p,N)$ is a normalization factor. The fractional 
	$p-$Laplacian is a nonlocal version of the $p-$Laplacian 
	and is an extension of the fractional Laplacian ($p= 2$).
	
	\medskip
	
	In the last few years, the nonlocal operators have taken 
	relevance because they arise in a number of applications in many 
	fields, for instance,  game theory, 
	finance, image processing, L\'evy processes, and optimization, see  
	\cite{Caffarelli2012,MR2042661,MR2480109,MR1406564,MR0521262}
	and the references therein. From of the mathematical point of view,
	the fractional $p-$Lapalcian has a great 
	attractive since two phenomena are present in it: the
	nonlinearity of the operator and its nolocal character. 
	See for instance \cite{DelPezzoQuaas,brasco2014second,MR3411543,
	MR3542614,IMS,LL,MR3491533,MR3456825,MR3483598} and the references
	therein.
	
	
	\subsection{Statements of the main results}
		Before starting to state our results we need to introduce the 
		theoretical framework for them.
		
		\medskip
		
		Throughout this paper, $\Omega$ is an open set of 
		$\R^N,$ 
		$s\in(0,1),$ $p\in(1,\infty)$ and 
		to simplify notation, we omit the constant $\K(s,p,N).$ 
		From now on, given a subset $A$ of $\R^N$ we set $A^c=\R^N\setminus A,$
	    and $A^2=A\times A.$
		
		\medskip
		
		The fractional Sobolev spaces $\wsp$
		is defined to be the set of functions $u\in\lp$ such that
		\[
			|u|_{\wsp}^p\coloneqq
			\int_{\Omega^2}
			\dfrac{|u(x)-u(y)|^p}{|x-y|^{N+sp}}\, dxdy<\infty.
		\]
		The fractional Soblev spaces admit the following norm
		\[
			\|u\|_{\wsp}\coloneqq\left(\|u\|_{\lp}^p+|u|_{\wsp}^p
			\right)^{\frac1p},
		\]
		where
		\[
			\|u\|_{\lp}^p\coloneqq\int_\Omega |u(x)|^p\, dx.
		\]
		The space $\wspr$ is defined similarly.
	
		\medskip
	
		We will denote by $\wwsp$ the space of all 
		$u\in\wsp$ such that $\tilde{u}\in\wspr,$ 
		where $\tilde{u}$ is the extension by 
		zero of $u.$ The dual space of $\wwsp$ is 
		denoted by $\dwsp$ and the 
		corresponding dual pairing is denoted by 
		$\langle \cdot , \cdot \rangle.$
		
		\medskip
		
		The space $\mathcal{W}^{s,p}(\Omega)$ is the space of 
		all function $u\in L^{p}_{loc}(\R^N)$ such that
		for any bounded $\Omega^\prime\subseteq\Omega$
		there is an open set $U\supset\supset\Omega^\prime$ so that
		$u\in W^{s,p}(U),$ and 
		\[
			[u]_{s,p}\coloneqq\int_{\R^N}\dfrac{|u(x)|^{p-1}}
			{(1+|x|)^{N+sp}}dx<\infty.
		\]
		
		\begin{re}
			Suppose that $\Omega$ is bounded. 
			Then $u\in \mwsp$ if only if there is an open set 
			$U\supset\supset\Omega$ such that $u\in W^{s,p}(U),$ and
			$[u]_{s,p}<\infty.$  In addition,
			$\wwsp\subset\mwsp.$ 
		\end{re}

%
	
%
		
		\medskip
		
		Further informations on fractional Sobolev spaces and 
		many references may be found in 
		\cite{Adams, DD, DNPV, Grisvard,IMS}. 

		\medskip
		
		Now, let us introduce our notion of weak super(sub)-solution.
		Given $f\in \dwsp,$  we say that 
		$f\ge(\le)0$ if for any $\varphi\in\wwsp,$ $\varphi\ge0$
		we have that $\langle f,\varphi \rangle \ge (\le)0.$
		
		\medskip
		
		When  $\Omega$ is bounded,  we say that $u\in\mwsp$ is a weak  
		super(sub)-solution of $(-\Delta_p)^su=f$ in $\Omega$ if
		\[
			\int_{\R^{2N}}\dfrac{(u(x)-u(y))^{p-1}
			(\varphi(x)-\varphi(y))}
			{|x-y|^{N+sp}}
			dxdy\ge(\le)\langle f,\varphi\rangle
		\] 
		for each $\varphi\in\wwsp,\varphi\ge0.$ 
		
		\medskip
		
		When $\Omega$ is unbounded, we say that $u\in\mwsp$ is a weak  
		super(sub)-solution of $(-\Delta_p)^su=f$ in $\Omega$ if
		for all bounded open set $\Omega^{\prime}\subset \Omega$
		we have that $u$ is a weak  
		super(sub)-solution of $(-\Delta_p)^su=f$ in $\Omega^\prime.$
	
		\medskip
		
		In both cases, $u$ is a weak solution of 
		$(-\Delta_p)^su=f$ in $\Omega$ if $u$ is a super and sub-solution
		of $(-\Delta_p)^su=f.$
		
		\medskip
		
		Given a function {$c \in L^{1}_{loc}(\Omega),$} we say that $u\in\mwsp$ is a weak  
		super(sub)-solution of $(-\Delta_p)^su=c(x)|u|^{p-1}u$ in $\Omega$ if
		$f=c(x)|u|^{p-2}u\in \dwsp,$ and $u\in\mwsp$ is a weak  
		super(sub)-solution of $(-\Delta_p)^su=f$ in $\Omega$. Finally,  we say that 
		$u$ is a weak solution of $(-\Delta_p)^su=c(x)|u|^{p-2}u$ in $\Omega$ if $u$ 
		is a super and sub-solution
		of $(-\Delta_p)^su=c(x)|u|^{p-2}u.$
		
		\medskip

		Our first result is the following minimum principle. 
	
		\begin{teo}\label{teo:mprinciple} 
			Let {$c \in L^{1}_{loc}(\Omega)$} be a non-positive function 
			and  $u\in\mwsp$ be a weak super-solution of  
			\begin{equation}\label{eq:ecC}
				(-\Delta_p)^s u = c(x)|u|^{p-2}u \quad \mbox{ in }\Omega.
			\end{equation}
 		
			\begin{enumerate}
				\item If $\Omega$ is bounded,  
					and $u\ge0$ a.e. in $\Omega^c$ then either 
					$u>0$ {a.e.} in $\Omega$ or $u=0$ a.e. in $\R^N.$
				\item  If  $u\ge0$ a.e in $\mathbb{R}^N$ then either 
					$u>0$ {a.e.} in $\Omega$ or $u=0$ a.e. in $\R^N.$
			\end{enumerate}  				
		\end{teo}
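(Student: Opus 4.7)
The plan is to handle the two parts in sequence: reduce part~(1) to part~(2) via a weak minimum principle, and then prove the dichotomy in part~(2) by contradiction, using a nonnegative test function supported in the zero set of $u$.

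For the reduction of (1) to (2), I would test the supersolution inequality against $\varphi = u_- := \max(-u,0)$. Because $u \ge 0$ a.e.\ in $\Omega^c$, the function $u_-$ vanishes outside $\Omega$ and therefore belongs to $\wwsp$. The right-hand side of the weak inequality equals $\int c\,|u|^{p-2}u\,u_-\,dx = -\int c\,u_-^{\,p}\,dx \ge 0$ since $c\le 0$, while the left-hand side is controlled from above by the standard pointwise inequality
\[
|a-b|^{p-2}(a-b)(a_- - b_-) \le -|a_- - b_-|^{p} \qquad (a,b\in\R),
\]
which, integrated against $|x-y|^{-N-sp}$, yields $-|u_-|_{W^{s,p}(\R^N)}^{\,p}$. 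Comparing these bounds forces $u_- \equiv 0$, and hence $u \ge 0$ a.e.\ in $\R^N$, reducing (1) to (2).

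For part~(2), assume $u \ge 0$ a.e.\ in $\R^N$ with $u \not\equiv 0$, and suppose for contradiction that $Z := \{x \in \Omega : u(x) = 0\}$ has positive measure. The heart of the argument is to produce a nonnegative, nontrivial $\varphi \in \wwsp$ whose support lies inside $Z$. Inserting such a $\varphi$ into the supersolution inequality, the right-hand side $\int c\,|u|^{p-2}u\,\varphi\,dx$ vanishes because $u \equiv 0$ on $\supp\varphi$. Decomposing the Gagliardo integrand according to whether $x$ and/or $y$ lie in $\supp\varphi$---on $\supp\varphi \times \supp\varphi$ both $u$-values are zero, and on the two mixed regions $\varphi$ vanishes at one variable while $u$ vanishes at the other---the inequality collapses to
\[
-2 \int_{\supp\varphi} \varphi(x) \int_{\R^N} \frac{u(y)^{p-1}}{|x-y|^{N+sp}}\,dy\,dx \;\ge\; 0.
\]
Since $u \ge 0$ and $u \not\equiv 0$ in $\R^N$, the inner integral is strictly positive for a.e.\ $x \in \supp\varphi$, so the left-hand side is strictly negative, a contradiction.

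The main obstacle is the construction of this test function: for a generic set $Z$ of positive measure one cannot simply use a smooth cut-off (which would fail to be supported in $Z$), and characteristic functions of measurable sets need not belong to $W^{s,p}(\R^N)$. When $sp < 1$, picking a Lebesgue density point $x_0 \in Z$ and setting $\varphi = \chi_{Z \cap B_r(x_0)}$ for sufficiently small $r$ works, since bounded measurable sets have finite fractional $s$-perimeter in this regime. When $sp \ge 1$ a more delicate approach is required---for instance first upgrading $u$ to a lower semicontinuous representative so that $\{u=0\}$ is closed and admits nonnegative smooth subfunctions, or approximating $\chi_{Z \cap B_r(x_0)}$ by admissible test functions and passing to the limit in the Gagliardo integral while controlling the nonlocal tail $[u]_{s,p}$.
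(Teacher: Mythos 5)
Your reduction of part (1) to part (2) by testing with $u_-$ is exactly the paper's Lemma \ref{lema:mprinc}(b), and your ``collapse'' of the Gagliardo form when the test function is supported where $u$ vanishes is the same nonlocal mechanism the paper uses in Lemma \ref{lema:mprinc}(a) and Lemma \ref{lema:mprinciple2}. But in the paper that mechanism is only ever applied when the vanishing set is \emph{open} (all of $\Omega$, or a whole connected component $U$), where smooth nonnegative test functions obviously exist. You apply it to the raw zero set $Z=\{u=0\}$, which is merely of positive measure, and this is where the argument breaks: the existence of a nontrivial nonnegative $\varphi\in\wspr$ with $\varphi=0$ a.e.\ on $Z^c$ is not guaranteed, and none of your proposed fixes supplies it. If $sp>N$ (e.g.\ $N=1$, $s$ close to $1$), every $W^{s,p}(\R^N)$ function is continuous, so if $Z$ has empty interior (a fat Cantor set of positive measure is not excluded by anything proved so far) the only admissible $\varphi$ is $\varphi\equiv 0$; passing to a lower semicontinuous representative makes $Z$ closed but does not give it interior. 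For $sp<1$ your claim that bounded measurable sets have finite fractional perimeter is false (take a union of balls $B(x_k,r_k)$ with $\sum r_k^N<\infty$ but $\sum r_k^{N-sp}=\infty$), so $\chi_{Z\cap B_r(x_0)}$ need not lie in $W^{s,p}$; and approximating it in $L^p$ by admissible functions does not let you pass to the limit in the Gagliardo form. In short, knowing that $Z$ is ``thick enough'' to support a Sobolev function is essentially the conclusion you are trying to reach, so the construction is circular at the hard case.

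The paper avoids this entirely with the logarithmic lemma (Lemma \ref{lema:DKP}, after Di Castro--Kuusi--Palatucci and Brasco--Franzina): one tests with $\varphi=(u+h)^{1-p}\phi^p$ for a \emph{smooth cutoff} $\phi$ --- no support restriction relative to $Z$ is needed --- and obtains a bound on $\int_{B_r^2}|\log\frac{u(x)+h}{u(y)+h}|^p|x-y|^{-N-sp}\,dxdy$ uniform in $h\in(0,1)$. Letting $h\to0^+$, if $u$ vanished on a positive-measure subset of $B_r$ while being positive on another positive-measure subset, the left side would blow up; hence on each small ball either $u=0$ a.e.\ or $u>0$ a.e. A chaining argument then covers each connected component, and only \emph{after} that does the paper invoke the nonlocal collapse (your computation, now on an open component $U$) to propagate $u\equiv0$ from one component to all of $\R^N$. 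To repair your proof you would need either to import this logarithmic estimate or to prove independently that the zero set of a nonnegative, not identically zero supersolution cannot have positive measure without containing the support of a $W^{s,p}$ function --- which is the real content of the theorem.
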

		
		\begin{re}\label{re:notnecassump}
			Observe that $c\in L^1_{loc}(\Omega)$ then 
			$c(x)\ge c^-(x)=\min\{0,c(x)\}\in L^1_{loc}(\Omega).$ 
			Then, if $u$ is 
			a  weak super-solution of \eqref{eq:ecC} 
			and $u\ge0$ a.e. in $\R^N$ then $u$ is
			also a weak super-solution of
			\[
				(-\Delta_p)^su=c^-(x)u^{p-1}\quad \text{ in } \Omega.
			\]
			Then, by Theorem \ref{teo:mprincipleVis}, $u>0$ a.e. in 
			$\Omega$ or $u=0$ a.e. in $\R^N.$ That is, in the case
			$u\ge0$ a.e. in $\R^n,$ the non-positivity assumption on
			the function $c$ is not necessary.
		
			In fact, the previous result also holds 
			for all measurable function $c$ for which there is 
			$d\in L^1_{loc}(\Omega)$ such that $c\ge d$ a.e. in 
			$\Omega.$ 
		\end{re}
	

		Under the assumption that $c$ and $u$ also are continuous function, 
		by the properties that all continuous weak super-solutions 
		are viscosity super-solutions and using a test function,
		we can remove ``a.e" in the statement of our previous theorem.
		For more details, see Section \ref{Preli}.
	
		\begin{teo}\label{teo:mprincipleVis} 
			Let {$c \in C(\overline{\Omega})$} be a non-positive function 
			and  $u\in\mwsp\cap C(\overline{\Omega})$ be a weak 
			super-solution of  
			\eqref{eq:ecC}. 
			\begin{enumerate}
				\item If $\Omega$ is bounded,  
					and $u\ge0$ a.e. in $\Omega^c$ then either 
					$u>0$ in $\Omega$ or $u=0$ a.e. in $\R^N.$
				\item  If  $u\ge0$ a.e. in $\mathbb{R}^N$ 
					then either $u>0$ in $\Omega$ or $u=0$ a.e. 
					in $\R^N.$
			\end{enumerate}  		
		\end{teo}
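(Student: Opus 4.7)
The plan is to use Theorem \ref{teo:mprinciple} to reduce to the almost-everywhere dichotomy and then to upgrade the positivity alternative to a pointwise statement via a nonlocal viscosity argument at any alleged interior zero.

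First, since $c \in C(\overline{\Omega}) \subset L^{1}_{loc}(\Omega)$ and $u \in \mwsp \cap C(\overline{\Omega})$ already satisfy the hypotheses of Theorem \ref{teo:mprinciple}, that theorem delivers the dichotomy: either $u>0$ a.e.~in $\Omega$, or $u=0$ a.e.~in $\R^N$. The second branch is precisely the second alternative of the present statement, so the remaining task is to show that, under continuity on $\overline{\Omega}$, the alternative $u>0$ a.e.~in $\Omega$ improves to $u>0$ pointwise on $\Omega$.

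I would argue by contradiction. Combining $u \ge 0$ a.e.~in $\Omega^c$ (case 1) or in $\R^N$ (case 2) with $u>0$ a.e.~in $\Omega$, one has $u \ge 0$ a.e.~in $\R^N$; continuity of $u$ on $\overline{\Omega}$ then yields $u \ge 0$ everywhere on $\overline{\Omega}$. Suppose there exists $x_0 \in \Omega$ with $u(x_0)=0$. Relying on the fact (to be recorded in Section \ref{Preli}) that a continuous weak super-solution is a viscosity super-solution in the nonlocal sense, I would take $\varphi \equiv 0$ as an admissible lower test function at $x_0$: it satisfies $\varphi(x_0)=u(x_0)=0$ and $\varphi \le u$ on any ball $B_r(x_0)\subset\Omega$. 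Evaluating the operator on the truncation $v_r$ defined by $v_r \equiv 0$ on $B_r(x_0)$ and $v_r \equiv u$ on $\R^N \setminus B_r(x_0)$ gives
\[
(-\Delta_p)^s v_r(x_0) \;=\; -\int_{\R^N \setminus B_r(x_0)} \frac{u(y)^{p-1}}{|x_0 - y|^{N+sp}}\, dy \;\le\; 0,
\]
whereas the viscosity super-solution inequality demands $(-\Delta_p)^s v_r(x_0) \ge c(x_0)|u(x_0)|^{p-2}u(x_0)=0$.

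Comparing the two forces $u=0$ a.e.~on $\R^N \setminus B_r(x_0)$. Letting $r \downarrow 0$ along a sequence with $B_r(x_0)\subset\Omega$ yields $u=0$ a.e.~in $\R^N$, contradicting $u>0$ a.e.~in the non-empty open set $\Omega$. The main obstacle in this program is justifying the passage from the weak formulation to a pointwise viscosity evaluation: one must verify that continuous weak super-solutions genuinely admit $\varphi \equiv 0$ as a valid lower test function and that the resulting nonlocal integral is finite, which follows from the tail condition $[u]_{s,p}<\infty$ built into $u\in\mwsp$. Once this equivalence is in place, the computation above closes the argument.
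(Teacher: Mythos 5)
Your overall strategy coincides with the paper's: invoke Theorem \ref{teo:mprinciple} for the a.e.\ dichotomy, then rule out an interior zero $x_0$ by testing the viscosity super-solution inequality with a function vanishing at $x_0$ and reading off that the nonlocal tail $\int_{B_r(x_0)^c}u(y)^{p-1}|x_0-y|^{-N-sp}\,dy$ must vanish. The gap is the choice of $\varphi\equiv0$ as test function. In the viscosity framework the paper adopts (condition (VS3), following \cite{KKL}), a $C^2$ function $\phi$ touching $u$ from below at $x_0$ with $\nabla\phi(x_0)=0$ is admissible only if either $p>\nicefrac{2}{(2-s)}$, or $x_0$ is an \emph{isolated} critical point of $\phi$ and $\phi\in C^2_\beta$ near $x_0$ for some $\beta>\nicefrac{sp}{(p-1)}$. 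The zero function fails both clauses of alternative (b): every point is a critical point, and the quantity $\min\{d_\phi,1\}^{\beta-1}/|\nabla\phi|$ is identically infinite. So for $1<p\le \nicefrac{2}{(2-s)}$ your test function is not admissible, and Lemma \ref{lema:visco} (weak implies viscosity) gives you no inequality to contradict. The obstruction is not the finiteness of the tail integral --- which, as you note, follows from $[u]_{s,p}<\infty$ --- but the fact that the weak-to-viscosity comparison argument (which perturbs $\phi$ and needs quantitative control of $(-\Delta_p)^s$ applied to the perturbation) breaks down for degenerate test functions when $p$ is small.

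The paper circumvents this exactly where your argument stalls: in Lemma \ref{lema:positivity} it tests with $\phi^\varepsilon(x)=-\varepsilon|x-x_\star|^\beta$, $\beta>\max\{2,\nicefrac{2}{(2-s)}\}$, which has an isolated critical point at $x_\star$ and belongs to $C^2_\beta$, hence is admissible for every $p\in(1,\infty)$; the extra contribution $\varepsilon^{p-1}\int_{B_r(x_\star)}|y-x_\star|^{\beta(p-1)-N-ps}\,dy$ is finite by the choice of $\beta$ and tends to $0$ as $\varepsilon\to0$, recovering your conclusion that $u=0$ a.e.\ in $B_r(x_\star)^c$. As written, your proof is complete only in the regime $p>\nicefrac{2}{(2-s)}$ (in particular for all $p\ge2$); to cover the full range you need this regularization of the test function, or an independent justification that the super-solution inequality holds for $\phi\equiv0$.
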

	
		\bigskip
	
		Lastly, we show our Hopf lemma.
	
		\begin{teo}\label{teo:Hopf}
	 		Let $\Omega$ satisfy the interior ball condition in 
	 		$x_0\in\partial\Omega$, $ c\in C(\overline{\Omega}),$
	 		and $u\in \mwsp\cap C(\overline{\Omega})$ 
	 		be a  weak super-solution of \eqref{eq:ecC}. 
			\begin{enumerate}
				\item If $\Omega$ is bounded, 
					$c(x)\le 0$ in $\Omega$ 
					and $u\ge0$ a.e. in $\Omega^c$ then either 
					$u=0$ a.e. in $\R^N$ or 
				\begin{equation}\label{eq:Hopf}
					\liminf_{B_R\ni x\to x_0}
					\dfrac{u(x)}{\delta_R(x)^s}>0		
				\end{equation}
				where $B_R\subseteq \Omega$ and $x_0\in \partial B_R$ 
				and $\delta_R(x)$  is distance from $x$ to
				$B_R^c.$
				\item  If  $u\ge0$ a.e. in $\mathbb{R}^N$ then either 
					$u=0$  a.e. in $\R^N$ or \eqref{eq:Hopf} holds.
			\end{enumerate}  		
		\end{teo}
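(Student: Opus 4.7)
The plan is to reduce to the case $u\not\equiv 0$ using Theorem~\ref{teo:mprincipleVis}, and then to bound $u$ from below on the interior tangent ball by an explicit sub-solution that behaves like $\delta_R^s$. If $u$ vanishes a.e.\ in $\R^N$ the statement is trivial; otherwise Theorem~\ref{teo:mprincipleVis} yields $u>0$ in $\Omega$ and $u\ge 0$ a.e.\ in $\R^N$ in both cases. Denote the interior ball by $B_R=B_R(x_1)\subseteq\Omega$ with $x_0\in\partial B_R$. By continuity of $u$ on $\overline{\Omega}$ there exists $m>0$ with $u\ge m$ on $\overline{B_{R/2}(x_1)}$, and $M\coloneqq\|c\|_{L^\infty(\Omega)}<\infty$.

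The heart of the argument is a radial barrier $\psi\in\mwsp$ with $\supp\psi\subseteq\overline{B_R(x_1)}$, $0\le\psi\le m$ in $\R^N$, $\psi(x)\ge c_0\,\delta_R(x)^s$ in the annulus $A\coloneqq B_R\setminus\overline{B_{R/2}(x_1)}$ for some $c_0>0$, and satisfying pointwise (hence weakly) in $A$
\[
	(-\Delta_p)^s\psi(x)\le c(x)\,|\psi(x)|^{p-2}\psi(x).
\]
The natural candidate is a sufficiently small multiple $\varepsilon\,w$ of the profile $w(x)=\bigl((R^2-|x-x_1|^2)_+\bigr)^s$ (or a smoothed variant), whose fractional $p$-Laplacian is negative and bounded away from $0$ on $A$; scaling by small $\varepsilon>0$ preserves the sign (the leading part scales like $\varepsilon^{p-1}$) and ensures $\psi\le m$. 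Since $|c|\psi^{p-1}\le M\varepsilon^{p-1}w^{p-1}$ is uniformly controlled on $A$, the inequality displayed above can be enforced there.

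With $\psi$ at hand, a weak comparison argument on $A$ closes the proof. The boundary datum $\psi\le u$ holds on $A^c$: on $B_R^c$ because $\psi\equiv 0$ and $u\ge 0$ a.e.\ in $\R^N$, and on $\overline{B_{R/2}(x_1)}$ because $\psi\le m\le u$. From
\[
	(-\Delta_p)^s u-(-\Delta_p)^s\psi\ge c(x)\bigl(u^{p-1}-\psi^{p-1}\bigr)\quad\text{weakly in }A,
\]
testing with $(\psi-u)_+\in\widetilde{W}^{s,p}(A)$ (which vanishes outside $A$ by the boundary datum) and using the strict monotonicity of $t\mapsto|t|^{p-2}t$ together with $c\le 0$ yields $(\psi-u)_+\equiv 0$, i.e.\ $u\ge\psi$ a.e.\ in $A$. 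Dividing by $\delta_R(x)^s$ and passing to $\liminf$ as $x\to x_0$ inside $B_R$ delivers \eqref{eq:Hopf}.

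The main obstacle is the construction of the barrier: unlike $p=2$, there is no explicit identity $(-\Delta_p)^s w\equiv$ const for the natural profile, so one must estimate the nonlinear singular integral in the annulus carefully to ensure simultaneously the sign $(-\Delta_p)^s\psi\le -C<0$ and the correct $\delta_R^s$ boundary asymptotics. This is precisely the type of barrier computation carried out in the authors' earlier work \cite{DelPezzoQuaas} and in \cite{IMS,brasco2014second}, upon which the present argument rests. A minor technical point is justifying that the pointwise inequality satisfied by $\psi$ can be used against the weak super-solution inequality for $u$; this follows from the $C^s$-regularity of $\psi$ up to $\partial B_R$, which makes $(-\Delta_p)^s\psi$ a continuous function in $A$ paired against $(\psi-u)_+$ as in the proof of Theorem~\ref{teo:mprinciple}.
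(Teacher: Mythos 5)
Your overall strategy (reduce to $u>0$ via Theorem \ref{teo:mprincipleVis}, then bound $u$ from below on the interior tangent ball by a barrier behaving like $\delta_R^s$) is the same as the paper's, but the step you defer --- the construction of the barrier --- is precisely the nontrivial content of the proof, and your proposed candidate does not work as described. You claim that the fractional $p$-Laplacian of $w(x)=\bigl((R^2-|x-x_1|^2)_+\bigr)^s$ is ``negative and bounded away from $0$'' on the annulus; already for $p=2$ this is false: $(-\Delta)^s\bigl((1-|x|^2)_+^s\bigr)$ equals a \emph{positive} constant in $B_1$, so this profile is a supersolution of the homogeneous equation, not the subsolution you need. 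For general $p$ the only available information (Theorem \ref{teo:dist}, i.e.\ \cite[Theorem 3.6]{IMS}) is that $(-\Delta_p)^s\delta^s=f$ near $\partial B_R$ with $f\in L^\infty$ of no particular sign, and since the operator is $(p-1)$-homogeneous, multiplying by a small $\varepsilon$ rescales but cannot create the required sign $(-\Delta_p)^s\psi\le c\,\psi^{p-1}\le 0$. The paper fixes this by exploiting nonlocality: it perturbs $\delta^s$ to $w=\delta^s+\alpha\chi_K$ with $K$ a compact ball well inside the domain, so that by Lemma \ref{lema:nonlocalbehavior} the right-hand side becomes $f+h_\alpha$ with $h_\alpha\to-\infty$ uniformly on the boundary strip as $\alpha\to\infty$; choosing $\alpha$ large makes $\varepsilon w$ a genuine subsolution lying below $c\,u^{p-1}$, and Proposition \ref{pro:cp} then closes the argument. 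This perturbation idea is the missing ingredient in your write-up; pointing to ``barrier computations in the references'' does not supply it, and the specific computation you gesture at has the wrong sign.

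A secondary issue: in case (2) the hypothesis $c\le 0$ is not available, yet your comparison step explicitly invokes $c\le 0$ to discard the term $c\,(u^{p-1}-\psi^{p-1})$ tested against $(\psi-u)_+$. The paper avoids this by arranging the barrier so that $(-\Delta_p)^s(\varepsilon w)\le\inf c\,u^{p-1}$ on the boundary strip outright (possible because $c\,u^{p-1}$ is bounded there and $h_\alpha\to-\infty$), after which Proposition \ref{pro:cp} compares the nonlinear forms directly with no sign assumption on $c$.
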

	
		\medskip
		
		Now, we give a brief resume about the Hopf's lemma and the strong 
		minimum principle for the fractional Laplacian. 
		In \cite[Proposition 2.7]{MR2677613} the authors 
		show the strong minimum principle and a generalized Hopf lemma
		for fractional harmonic functions. 		
		Whereas, in \cite{RS},  under the assumption 
		$\Omega$ is a smooth bounded domain,
		it is proven a Hopf lemma for weak solutions of 
		a  Dirichlet problem. See also \cite{MR3395749,MR3447732}. 
		For a Hopf lemma with mixed boundary condition, 
		see \cite{2016arXiv160701505B}. 
		
		\medskip
		
		Finally, Theorems 
		\ref{teo:mprincipleVis} and \ref{teo:Hopf} are 
		known for the fractional Laplacian 
		only for pointwise solutions, see \cite{Greco2014HopfsLA}. See
		also \cite{Iannizzotto2015} for $p=2$ and \cite{MR3474405}  for $p\neq 2.$
		Thus, our results generalize the results
		of \cite{Greco2014HopfsLA} in two way: for nonlinear 
		operators and weak solutions.

		\medskip

		To complete the introduction, we want to make 
		a little remark related to our result and the optimal 
		regularity of the Dirichlet problem. 
		Given $f\in L^\infty(\Omega),$ if 
		$\Omega\subset \R^N$ is a bounded smooth domain  and $u$ is a 
		weak solution of 
		\[
			(-\Delta_p)^s u = f(x) \quad \text{in } \Omega,
			\quad u=0 \quad\text{in }\Omega^c,
		\]
		then, by \cite[Theorem 1.1]{IMS}, 
		there is $\alpha=\alpha(N,s,p)\in(0,1)$ such that
		$u\in C^{\alpha}(\overline{\Omega}).$ 
		In fact that, we cannot expect more than
		$s-$H\"older continuity, see \cite[Section 3]{IMS}.
		
		Also, by Theorem \ref{teo:Hopf}, 
		we can deduce that $\alpha\le s.$ 
		Suppose that there exists a function 
		$c\in C(\overline{\Omega})$ such that
		$c\le0$ in $\Omega$ and $c(x)|u(x)|^{p-2}u(x)\le f(x),$ 
		(for instance, if $f\ge0$ we can take $c\equiv0$).
		Then $u$ is a weak super-solution of
		\[
			(-\Delta_p)^s u = c(x)|u(x)|^{p-2}u(x) \quad \text{in } \Omega.
		\]
		Thus, by Theorem \ref{teo:Hopf},  $\alpha\le s.$

\section{Preliminaries}\label{Preli}
	Let's start by introducing the notations and definitions 
	that we will use in this work. 
	We also gather some preliminaries properties which 
	will be useful in the forthcoming 
	sections.
	
	\medskip
	
	 If $t\in\R$ and $q>0,$ we will denote 
	$|t|^{q-1}t$ by $t^q$. For all function 
	$u\colon \Omega\to\R$ we define 
	\[
		u_+(x)\coloneqq\max\{u(x),0\} \quad\mbox{ and } \quad
		u_-(x)\coloneqq\max\{-u(x),0\},
	\]
	\[
		\Omega_+\coloneqq\{x\in\Omega\colon u(x)>0\}
		\quad\mbox{ and }\quad
		\Omega_-\coloneqq\{x\in\Omega\colon u(x)<0\}.
	\]

	\medskip
					
		
		Our next remark shows that $u_+$ and $u_-$
		belong to the same space as $u.$
		
		\begin{re}
			If $\mathcal{X}=\wsp$ or $\wwsp$ or $\mwsp,$ 
			and $u\in\mathcal{X}$  then $u_{+},u_{-}\in\mathcal{X}$
			owing to 
			\[
				|u_{-}(x)-u_{-}(y)|\le |u(x)-u(y)|
				\quad\mbox{ and }\quad
				|u_{+}(x)-u_{+}(y)|\le |u(x)-u(y)|
			\]
			for all $x,y\in\Omega.$
		\end{re}

		\medskip
	
		The proof of the following results can be found in \cite{IMS}.
	
		\begin{lem}{\cite[Lemma 2.8]{IMS}}\label{lema:nonlocalbehavior}
			Suppose $f\in L^1_{loc}(\Omega)$ and 
			$u\in\mwsp$ is a weak solution of $(-\Delta_p)^su=f$
			in $\Omega.$ Let $v\in L^1_{loc}(\mathbb{R}^N)$ be such that
			\[
				\dist(\supp(v),\Omega)>0 \mbox{ and } 
				\int_{\Omega^c}\dfrac{|v(x)|^{p-1}}{(1+|x|)^{N+sp}}
				dx<\infty,
			\]
			and define for a.e Lebesgue point $x\in\Omega$ of $u$
			\[
		 		h(x)=2\int_{\supp{v}}
		 		\dfrac{(u(x)-u(y)-v(y))^{p-1}-
		 		(u(x)-u(y))^{p-1}}{|x-y|^{N+sp}}dy.
			\]
			Then $u+v\in\mwsp$ and is a weak solution of 
			$(-\Delta_p)^s(u+v)=f+h$ in $\Omega.$
		\end{lem}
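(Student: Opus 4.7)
The strategy is to test the weak formulation for $u+v$ against an arbitrary $\varphi\in\wwsp[\Omega']$ (extended by zero) for a bounded $\Omega'\subset\subset\Omega$, and to exploit the two complementary vanishing properties: $v\equiv 0$ on a neighborhood of $\overline{\Omega}$ (from the distance hypothesis $\dist(\supp v,\Omega)>0$) and $\varphi\equiv 0$ on $(\Omega')^c$. With these in hand, all the bookkeeping collapses onto the cross regions $\Omega\times\Omega^c$ and $\Omega^c\times\Omega$.

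First I would check that $u+v\in\mwsp$. For any bounded $\Omega'\subset\subset\Omega$ I pick an open set $U$ with $\Omega'\subset\subset U\subset\subset\Omega$ and $\dist(U,\supp v)>0$; then $v\equiv 0$ on $U$, so $u+v=u\in W^{s,p}(U)$. The tail condition $[u+v]_{s,p}<\infty$ follows from $|u+v|^{p-1}\le C(|u|^{p-1}+|v|^{p-1})$ together with $[u]_{s,p}<\infty$ and the hypothesis $\int_{\Omega^c}|v|^{p-1}(1+|x|)^{-N-sp}dx<\infty$ (recall $v\equiv 0$ on $\Omega$). The same argument shows $h$ is well-defined and locally integrable on $\Omega$: for $x\in\Omega'$ and $y\in\supp v$ we have $|x-y|\ge d>0$, and a Taylor/subadditivity estimate on $t\mapsto t^{p-1}$ (using $|a^{p-1}-b^{p-1}|\le|a-b|^{p-1}$ when $1<p\le 2$ and $|a^{p-1}-b^{p-1}|\le C(|a|+|b|)^{p-2}|a-b|$ when $p\ge 2$, with $a-b=-v(y)$) reduces $|h(x)|$ to an integral of $|v(y)|^{p-1}/(1+|y|)^{N+sp}$ up to lower-order pieces that are controlled by $u\in\mwsp$.

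The core computation is the algebraic identity for the difference
\[
J:=\int_{\R^{2N}}\frac{\bigl[((u+v)(x)-(u+v)(y))^{p-1}-(u(x)-u(y))^{p-1}\bigr](\varphi(x)-\varphi(y))}{|x-y|^{N+sp}}\,dx\,dy.
\]
I split $\R^{2N}=(\Omega\times\Omega)\cup(\Omega\times\Omega^c)\cup(\Omega^c\times\Omega)\cup(\Omega^c\times\Omega^c)$. On $\Omega\times\Omega$ both $v(x)$ and $v(y)$ vanish, so the integrand is zero; on $\Omega^c\times\Omega^c$ both $\varphi(x)$ and $\varphi(y)$ vanish. On $\Omega\times\Omega^c$ the integrand reduces to
\[
\frac{[(u(x)-u(y)-v(y))^{p-1}-(u(x)-u(y))^{p-1}]\varphi(x)}{|x-y|^{N+sp}}.
\]
On $\Omega^c\times\Omega$, I swap the variables and use that $t\mapsto t^{p-1}=|t|^{p-2}t$ is odd, so $(-\alpha)^{p-1}=-\alpha^{p-1}$; a short calculation shows this contribution equals the $\Omega\times\Omega^c$ one. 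Since $v$ is supported in $\Omega^c$, integration in $y$ can be restricted to $\supp v$, and I obtain $J=\int_{\Omega'}\varphi(x)h(x)\,dx$.

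Combining this with the weak equation for $u$ gives $(-\Delta_p)^s(u+v)=f+h$ as claimed. The main obstacle is the integrability bookkeeping: one must verify that each piece of the split integral is absolutely convergent (so Fubini and the relabeling are legal) and that $h\in\dwsp[\Omega']$. Both boil down to combining the uniform lower bound $|x-y|\ge d$ on the cross terms, the tail-integrability hypothesis on $v$, and the proper version of the elementary inequality for $|a^{p-1}-b^{p-1}|$ in the regimes $1<p\le 2$ and $p\ge 2$; everything else is a symmetrization and relabeling argument.
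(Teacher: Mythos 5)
This lemma is quoted verbatim from \cite[Lemma 2.8]{IMS} and the paper gives no proof of its own, so the comparison is with the argument in \cite{IMS}: your proof is correct and is essentially that argument — the four-region splitting of $\R^{2N}$, the complementary vanishing of $\varphi$ on $\Omega^c$ and of $v$ on a neighbourhood of $\overline{\Omega}$, the oddness of $t\mapsto t^{p-1}$ to fold the $\Omega^c\times\Omega$ block onto $\Omega\times\Omega^c$ (which produces the factor $2$ in $h$), and the lower bound $|x-y|\ge\dist(\supp v,\Omega)>0$ together with the tail condition on $v$ for the integrability of $h$. The only cosmetic points are that the set $U$ furnished by the definition of $\mwsp$ need not lie inside $\Omega$ (one should instead intersect it with the $\tfrac{d}{2}$-neighbourhood of $\Omega$, on which $v\equiv 0$), and that for $1<p\le 2$ the inequality $|a^{p-1}-b^{p-1}|\le |a-b|^{p-1}$ requires the constant $2^{2-p}$; neither affects the argument.
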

	
		\begin{teo}{\cite[Theorem 3.6]{IMS}}\label{teo:dist} 
			Let $\Omega$ be a bounded domain such that 
			$\partial \Omega$ is $C^{1,1},$ and 
			$\delta_{\Omega}(x)=\dist(x, \Omega^c).$ There
			exists $\rho=\rho(N,p,s,\Omega)$ such that 
			$\delta_{\Omega}^s$ is a weak solution of 
			$(-\Delta_p)^s\delta_{\Omega}^s=f$ in 
			$\Omega_{\rho}=\{x\in\Omega\colon 
			\delta_{\Omega}(x)<\rho\}$ for some 
			$f\in L^{\infty}(\Omega_{\rho}).$
		\end{teo}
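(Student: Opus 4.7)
The plan is to reduce the computation to a model on a half-space via the $C^{1,1}$ regularity of $\partial\Omega$, and then to control the perturbation. First I would fix $\rho>0$ small enough that $\delta_\Omega\in C^{1,1}(\overline{\Omega_\rho})$ and the nearest-point projection onto $\partial\Omega$ is a well-defined $C^{1,1}$ map on $\Omega_\rho$; this is the classical tubular-neighborhood theorem for $C^{1,1}$ boundaries. Moreover $\delta_\Omega^s\in \mwsp$ follows immediately from the fact that $\delta_\Omega$ is bounded and compactly supported. It therefore suffices to compute $(-\Delta_p)^s\delta_\Omega^s$ pointwise on $\Omega_\rho$, verify the resulting expression is bounded, and then check it coincides with the weak dual pairing against any admissible test function---a routine Fubini--dominated-convergence argument once the pointwise bound is in hand.

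The core ingredient is the half-space identity
\[
    (-\Delta_p)^s(x_N)_+^s(x) = 0 \qquad \text{for every } x\in\R^N \text{ with } x_N>0.
\]
By translation invariance in $(x_1,\dots,x_{N-1})$ the value depends only on $x_N$, and the scaling $(x_N)_+^s(\lambda x)=\lambda^s(x_N)_+^s(x)$ forces $(-\Delta_p)^s(x_N)_+^s(x) = c(N,s,p)\,x_N^{-s}$ for a dimensional constant. Evaluating at $x_N=1$, I would split the principal value into the contributions of $\{y_N<0\}$ (where the model vanishes, yielding an elementary kernel integral) and of $\{y_N>0\}$ (which reduces via Fubini in the tangential variables to a one-dimensional principal-value integral), and show that the two pieces cancel exactly. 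I expect this cancellation to be the main obstacle: for $p\ne 2$ the principal-value analysis is delicate and depends crucially on the exponent being precisely $s$.

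Given the half-space identity, the last step is to bound the perturbation between $\delta_\Omega^s$ and the flat model near the boundary. For $x\in\Omega_\rho$, set $x_0=\Pi_{\partial\Omega}(x)$ and choose coordinates in which $\partial\Omega$ is the graph of a $C^{1,1}$ function $\Phi$ with $\Phi(0)=0$ and $\nabla\Phi(0)=0$; the estimate $|\Phi(y')|\le C|y'|^2$ transfers to a quantitative bound on the residual $R:=\delta_\Omega^s-(x_N)_+^s$ near $x_0$. Inserting $\delta_\Omega^s=(x_N)_+^s+R$ into the operator and using standard H\"older control for the map $t\mapsto t^{p-1}$, the model contribution vanishes by the half-space identity, while the remainder splits into a near-singular piece controlled by the improved regularity of $R$ and a far-field piece controlled by $[\delta_\Omega^s]_{s,p}<\infty$; both bounds are uniform in $x\in\Omega_\rho$. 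This produces $f\in L^\infty(\Omega_\rho)$ with $(-\Delta_p)^s\delta_\Omega^s = f$ in the weak sense.
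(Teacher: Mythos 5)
This statement is quoted verbatim from \cite[Theorem 3.6]{IMS}; the paper offers no proof of its own, so your proposal must be measured against the argument in that reference. Your outline does follow the same broad strategy as [IMS] (half-space model plus a perturbation argument exploiting the $C^{1,1}$ boundary), and your scaling observation that translation invariance forces $(-\Delta_p)^s(x_N)_+^s(x)=c\,x_N^{-s}$ is correct. But the proposal defers exactly the two steps that constitute the mathematical content of the theorem. First, the half-space identity $c=0$: you write that you would ``show that the two pieces cancel exactly'' and yourself flag this as the main obstacle, but you do not carry it out. This cancellation is the heart of the matter --- for $p\neq 2$ it is a genuinely delicate one-dimensional principal-value computation (it occupies a substantial part of [IMS], where it rests on a nontrivial monotonicity/antiderivative argument), and nothing in your proposal indicates how it would be done. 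Asserting the key lemma is not proving it.

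Second, the perturbation step as you describe it would not go through. Because $(-\Delta_p)^s$ is nonlinear, writing $\delta_\Omega^s=(x_N)_+^s+R$ does not make ``the model contribution vanish'': there is no additive splitting of $(u+v)(x)-(u+v)(y)$ raised to the power $p-1$, and the error term $\bigl((a+b)^{p-1}-a^{p-1}\bigr)$ must be integrated against the kernel in a region where $a=(x_N)_+^s(x)-(x_N)_+^s(y)$ degenerates and $R$ is only quadratically small in the tangential displacement before the $s$-power is taken; ``standard H\"older control for $t\mapsto t^{p-1}$'' does not give a bound uniform in $x\in\Omega_\rho$ without considerable work. Indeed [IMS] does not argue this way: they derive two-sided pointwise bounds on $(-\Delta_p)^s\delta_\Omega^s$ by comparison with the $s$-th power of the distance to interior and exterior tangent balls (their Lemmas 3.4--3.5), which in turn requires a separate radial computation transferring the half-space result to annuli of balls. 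A minor additional point: the membership $\delta_\Omega^s\in\mwsp$ is not ``immediate'' from boundedness --- $s$-H\"older continuity alone does not imply finiteness of the $W^{s,p}$ seminorm (it only yields the non-integrable bound $|x-y|^{-N}$), so this too needs the quantitative structure of $\delta_\Omega^s$. As it stands, the proposal is a correct road map of the known proof with its two hardest segments left blank.
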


		\begin{pro}{\cite[Proposition 2.10]{IMS}}\label{pro:cp}
			Let $\Omega$ be bounded, $u,v\in\mwsp$ satisfy 
			$u\ge v$ in $\Omega^c$ and for all $\varphi\in\wwsp,$
			$\varphi\ge0$
						\begin{align*}
				\int_{\R^{2N}}&
				\dfrac{(u(x)-u(y))^{p-1}(\varphi(x)-\varphi(y))}
				{|x-y|^{N+sp}}dxdy\ge\\
				&\qquad \int_{\R^{2N}}
				\dfrac{(v(x)-
				v(y))^{p-1}(\varphi(x)-\varphi(y))}
				{|x-y|^{N+sp}}dxdy.
			\end{align*}
			Then $u\ge v$ a.e. in $\Omega.$
		\end{pro}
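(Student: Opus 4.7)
The plan is to test the given inequality against $\varphi = (v-u)_+$ and exploit the monotonicity of the map $t \mapsto t^{p-1} = |t|^{p-2}t$. First I would verify that $\varphi = (v-u)_+$ is an admissible test function: since $u, v \in \mwsp$ and $u \geq v$ a.e.\ in $\Omega^c$, the function $\varphi$ vanishes a.e.\ outside $\Omega$, is nonnegative, and lies in $\wwsp$ by the remark on truncations of functions in these fractional Sobolev spaces. Plugging this $\varphi$ into the given inequality yields
\[
\int_{\R^{2N}}
\frac{\bigl[(u(x)-u(y))^{p-1}-(v(x)-v(y))^{p-1}\bigr]\bigl[\varphi(x)-\varphi(y)\bigr]}{|x-y|^{N+sp}}\,dx\,dy \;\ge\; 0.
\]

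Next, the key is to show that the integrand above is nonpositive pointwise a.e., which combined with the displayed inequality forces it to vanish a.e. I would split $\R^N \times \R^N$ into the four regions according to the signs of $v(x)-u(x)$ and $v(y)-u(y)$. On $\{v \le u\}^2$ both $\varphi(x)$ and $\varphi(y)$ vanish; on $\{v > u\}^2$ one has $\varphi(x)-\varphi(y) = (v(x)-v(y)) - (u(x)-u(y))$, so that (writing $A = u(x)-u(y)$, $B = v(x)-v(y)$) the integrand becomes $-(A^{p-1}-B^{p-1})(A-B)\le 0$ by strict monotonicity of $t\mapsto t^{p-1}$; on the mixed regions, say $v(x)>u(x)\ge u(y)\ge v(y)$-flavoured, a direct check gives $B>A$ while $\varphi(x)-\varphi(y)=v(x)-u(x)>0$, and again the integrand is $\le 0$, with strict negativity unless $v(x)=u(x)$.

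Because the integral is both $\ge 0$ by hypothesis and $\le 0$ pointwise, the integrand must vanish a.e. Pointwise vanishing on the mixed regions together with the fact that $\{v\le u\}\supseteq \Omega^c$ has infinite measure then forces $|\{v>u\}\cap\Omega|=0$; otherwise one could pick $x$ in this set and $y$ in $\Omega^c$ and obtain a set of positive product measure where the integrand is strictly negative, a contradiction. This yields $u\ge v$ a.e.\ in $\Omega$.

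The main obstacle I anticipate is the careful case analysis proving strict negativity of the integrand on the mixed regions, and in particular converting the pointwise vanishing of a nonpositive integrand into the conclusion $(v-u)_+ = 0$ a.e.; the strict monotonicity inequality $(a^{p-1}-b^{p-1})(a-b)>0$ for $a\ne b$ (valid for all $p>1$) is essential here, and care is needed because the strictness for $p$ close to $1$ is mild and one cannot simply use a quantitative lower bound.
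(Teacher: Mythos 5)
Your argument is correct and is essentially the standard proof of this comparison principle: the paper does not reprove it but cites \cite[Proposition 2.10]{IMS}, whose proof (like the paper's own proof of Proposition \ref{pro:cp2}, via \cite[Lemma 9]{LL}) likewise tests with $(v-u)_+\in\wwsp$ and uses the strict monotonicity of $t\mapsto t^{p-1}$ together with the sign analysis on the four regions. No substantive difference to report.
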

	
		We also have a comparison principle for sub-solutions and 
		super-solutions of \eqref{eq:ecC}.
		\begin{pro}\label{pro:cp2}
			Let $\Omega$ be bounded, 
			$u,v\in \mwsp$ be nonnegative  super-solution and 
			sub-solution of \eqref{eq:ecC} in $\Omega$ respectively.  
			If $c(x)\le 0$ in $\Omega$ 
			and $u\ge v$ a.e. 
			in $\Omega^c$ then $u\ge v$ a.e. in $\Omega.$
		\end{pro}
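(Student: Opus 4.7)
The plan is to apply the classical monotonicity/comparison argument using the test function $\varphi := (v-u)_+$ extended by zero to $\R^N$. Since by hypothesis $u\ge v$ a.e.\ in $\Omega^c$, $\varphi$ vanishes there; a density/truncation argument (approximating $\varphi$ by compactly supported functions in $\Omega$) shows $\varphi\in\wwsp$, so it is admissible in both weak formulations. This admissibility step is where I expect the main technical delicacy: $u,v\in\mwsp$ are only globally controlled through a weighted tail, so one must verify that the Gagliardo seminorm of the zero-extension of $(v-u)_+$ across $\partial\Omega$ is finite.

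Using $\varphi$ in the weak super-solution inequality for $u$ and the weak sub-solution inequality for $v$, and then subtracting the former from the latter, gives
\begin{align*}
\int_{\R^{2N}} & \frac{[(v(x)-v(y))^{p-1}-(u(x)-u(y))^{p-1}]\,[\varphi(x)-\varphi(y)]}{|x-y|^{N+sp}}\,dx\,dy \\
&\le \int_{\Omega} c(x)\bigl[|v(x)|^{p-2}v(x)-|u(x)|^{p-2}u(x)\bigr]\varphi(x)\,dx.
\end{align*}
On the set $\{\varphi>0\}=\{v>u\}$, nonnegativity of $u,v$ together with the monotonicity of $t\mapsto t^{p-1}$ on $[0,\infty)$ yield $|v|^{p-2}v\ge|u|^{p-2}u$; combined with $c\le 0$, this forces the right-hand side to be $\le 0$.

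The heart of the proof is a pointwise algebraic inequality for the left integrand. Setting $a=v(x)$, $b=v(y)$, $\alpha=u(x)$, $\beta=u(y)$, I would verify that
\[
\bigl[(a-b)^{p-1}-(\alpha-\beta)^{p-1}\bigr]\bigl[(a-\alpha)_+-(b-\beta)_+\bigr]\ge 0
\]
by a four-case analysis on the signs of $a-\alpha$ and $b-\beta$, exploiting the strict monotonicity of $t\mapsto|t|^{p-2}t$ on $\R$. Moreover, the product vanishes only when either $v(x)-u(x)=v(y)-u(y)$ (first factor zero) or $\varphi(x)=\varphi(y)$ (second factor zero); in either alternative, $\varphi(x)=\varphi(y)$.

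Combining the two estimates, the left-hand side is both $\ge 0$ pointwise and has integral $\le 0$, so the integrand vanishes a.e., giving $\varphi(x)=\varphi(y)$ for a.e.\ $(x,y)\in\R^{2N}$. Hence $\varphi$ is a.e.\ constant on $\R^N$; since $\varphi=0$ on $\Omega^c$ and $\Omega$ is bounded (so $\Omega^c$ has positive measure), we conclude $\varphi\equiv 0$ a.e., i.e., $v\le u$ a.e.\ in $\Omega$, as desired.
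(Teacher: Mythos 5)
Your proof is correct and takes essentially the same route as the paper's: test both weak inequalities with $(v-u)_+\in\wwsp$, subtract, observe that $c\le 0$ and monotonicity make the right-hand side nonpositive, and conclude from the pointwise nonnegativity of the left integrand. The only difference is that the paper outsources this last sign analysis to the argument of Lemma 9 in \cite{LL}, whereas you spell it out explicitly (and correctly).
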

	\begin{proof}
		We first observe that since $u,v\in\mwsp$ we have that 
		$(v-u)_+\in\wwsp.$  Then, using that $u,v$ are 
		super-solution and 
		sub-solution of \eqref{eq:ecC} in $\Omega$ respectively,
		we get
		\begin{align*}
			&\int_{\R^{2N}}\!\!\!\!\!\!\dfrac{(v(x)-v(y))^{p-1}
			-(u(x)-u(y))^{p-1}}{|x-y|^{N+sp}}
			((v(x)-u(x))_+-(v(y)-u(y))_+) dx dy\\
			&\le\int_{\Omega}c(x)(v(x)^{p-1}-u(x)^{p-1})
			(v(x)-u(x))_+ dx\le0.
		\end{align*}
		The proof follows by the argument of \cite[Lemma 9]{LL}.
	\end{proof} 
	
	Our next result is referred to the regularity of the weak solutions.
	
	\begin{lem}\label{lem:regularida}
		Let $\Omega\subset\R^N$ be smooth bounded domain and $c\in L^\infty(\Omega).$
		If $u\in\wwsp$ is a weak solution of \eqref{eq:ecC} then
		there is $\alpha\in(0,1)$ such that 
		$u\in C^{\alpha}(\overline{\Omega}).$  
	\end{lem}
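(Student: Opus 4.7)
The plan is to reduce Lemma \ref{lem:regularida} to the known regularity result \cite[Theorem 1.1]{IMS}, which states that if $f\in L^{\infty}(\Omega)$ and $v\in\wwsp$ is a weak solution of $(-\Delta_p)^s v=f$ in a smooth bounded domain $\Omega$ with $v=0$ in $\Omega^c$, then $v\in C^{\alpha}(\overline{\Omega})$ for some $\alpha=\alpha(N,s,p)\in(0,1)$. Once we can ensure that the right-hand side $c(x)|u|^{p-2}u$ is in $L^{\infty}(\Omega)$, we are done.

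The key point is therefore to first establish that $u\in L^{\infty}(\Omega)$. Since $c\in L^{\infty}(\Omega)$, the function $u$ is a weak solution of $(-\Delta_p)^s u=g(x)$ in $\Omega$ with $g(x)=c(x)|u(x)|^{p-2}u(x)$, but a priori $g$ only belongs to a Lebesgue space determined by the fractional Sobolev embedding applied to $u\in\wwsp$. I would obtain the $L^{\infty}$ bound by a standard De Giorgi/Moser iteration adapted to the fractional $p$-Laplacian: test the weak formulation against $\varphi=(u-k)_+$ for $k>0$, use the algebraic inequality
\[
\bigl((u(x)-u(y))^{p-1}-(\tilde u(x)-\tilde u(y))^{p-1}\bigr)\bigl((u(x)-k)_+-(u(y)-k)_+\bigr)\ge 0
\]
(where $\tilde u=\min\{u,k\}$) together with the fractional Sobolev embedding $\wwsp\hookrightarrow L^{p^{*}_s}(\Omega)$, and iterate on a sequence of truncation levels $k_n\uparrow k_\infty$ to obtain $\|u\|_{L^{\infty}(\Omega)}\le C$ depending only on $N,s,p,\|c\|_\infty,\|u\|_{L^p(\Omega)}$ and $\Omega$. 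The argument for $-u$ is symmetric, so $u\in L^{\infty}(\Omega)$. (This is precisely the content of results such as those in \cite{IMS}; one may also invoke those directly rather than redoing the iteration.)

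With the $L^{\infty}$ bound in hand, $g(x)=c(x)|u(x)|^{p-2}u(x)$ satisfies $\|g\|_{L^{\infty}(\Omega)}\le \|c\|_{L^{\infty}(\Omega)}\|u\|_{L^{\infty}(\Omega)}^{p-1}<\infty$, and $u\in\wwsp$ is a weak solution of $(-\Delta_p)^s u=g$ in $\Omega$ with $u=0$ in $\Omega^c$. Applying \cite[Theorem 1.1]{IMS} yields $u\in C^{\alpha}(\overline{\Omega})$ for some $\alpha=\alpha(N,s,p)\in(0,1)$, concluding the proof.

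The main obstacle is the first step: the $L^{\infty}$ bound. The fractional $p$-Laplacian is nonlocal and nonlinear, so testing $(u-k)_+$ leaves a nonlocal tail term involving the values of $u$ outside the superlevel set. This tail term, however, has a favourable sign when combined with the truncation (by monotonicity of $t\mapsto t^{p-1}$), and the standard trick is to discard it or estimate it via $[u]_{s,p}$ and the fact that $u=0$ in $\Omega^c$. Once the Caccioppoli-type inequality is established, the Moser iteration is routine. Everything past this step is a direct application of the cited regularity theorem.
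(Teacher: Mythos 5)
Your proposal is correct and follows essentially the same route as the paper: first establish $u\in L^{\infty}(\Omega)$, then apply \cite[Theorem 1.1]{IMS} to the right-hand side $c(x)|u|^{p-2}u\in L^{\infty}(\Omega)$. The only difference is cosmetic: for the $L^{\infty}$ step the paper simply invokes \cite[Lemma 2.3]{MR3530213} together with a bootstrap argument, whereas you sketch the De Giorgi/Moser truncation iteration explicitly.
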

	
	\begin{proof}
		By \cite[Lemma 2.3]{MR3530213} 
		and bootstrap argument, we have that
		$u\in L^\infty(\Omega).$ Therefore, by \cite[Theorem 1.1]{IMS},
		$u\in C^{\alpha}(\overline{\Omega})$ for some 
		$\alpha\in(0,1).$
	\end{proof}
	
	\subsection{Viscosity solution}
	    In he rest of this section, $\Omega$ is bounded 
	    open set with smooth boundary 
	    and $c\in C(\overline{\Omega}).$ 
	    
		Following \cite{KKL}, we define our notion of
		viscosity super-solution of \eqref{eq:ecC}. 
		We start to introduce some notation 
		\[	
			L_{s,p}(\R^N)=\left\{u\in L^{p-1}_{loc}(\R^N)\colon
			[u]_{s,p}<\infty\right\}.
		\]
		The set of critical points
		of a differential function $u$ and the distance  from the
		critical points are denoted by
		\[
			N_u\coloneqq\{x\in\Omega\colon\nabla u(x)=0\},
			\quad d_u(x)\coloneqq\mathrm{dist}(x,N_u),
		\]
		respectively. Let $D\subset\Omega$ be an open set. We denote
		the class of $C^2-$functions whose gradient and Hessian are
		controlled by $d_u$ as
		\[
			C^2_{\beta}(D)\coloneqq\left\{
			u\in C^2(\Omega)\colon \sup_{x\in D}
			\left(\dfrac{\min\{d_u(x),1\}^{\beta-1}}
			{|\nabla u(x)|}+\dfrac{|D^2u(x)|}{d_u(x)^{\beta-2}}
			\right)<\infty\right\}.
		\]
		
		\medskip
		
		We are now in condition to introduce our definition. 
		We say that a function $u\colon\mathbb{R}^N\to[-\infty,\infty]$
		is a viscosity super-solution of \eqref{eq:ecC} if it satisfies
		the following four assumptions:
		\begin{enumerate}
			\item[(VS1)] $u<\infty$ a.e. in $\mathbb{R}^N$ and
				$u>-\infty$ everywhere in $\Omega;$
			\item[(VS2)] $u$ is lower semicontinuous in $\Omega;$
			\item[(VS3)] If $\phi\in C^2(B_r(x_0))$ for some 
			$B_r(x_0)\subset\Omega$ such that $\phi(x_0)=u(x_0)$ and 
			$\phi\le u$ in $B_r(x_0),$ and one of the following holds
			\begin{enumerate}
				\item[(a)]$p>\nicefrac2{(2-s)}$ or 
					$\nabla \phi(x_0)\neq0;$
				\item[(b)]$1<p\le\nicefrac2{(2-s)};$
					$\nabla \phi(x_0)=0$ such that $x_0$ is an isolate
					critical point of $\phi,$ and 
					$\phi\in C^2_{\beta}(B_r(x_0))$ for some
					$\beta>\nicefrac{sp}{p-1};$ 
			\end{enumerate}
			then $(-\Delta_p)^s\phi_r(x_0)\ge c(x_0)u(x_0)^{p-1},$ where
			\begin{equation}\label{ec:fir}
				\phi_r(x)=
				\begin{cases}
					\phi &\text{ if } x\in B_r(x_0),\\
					u(x) &\text{otherwise};
				\end{cases}
			\end{equation}
			\item[(VS4)] $u_-\in L_{s,p}(\R^N).$   
		\end{enumerate}
		
		A function $u$ is a viscosity sub-solution of \eqref{eq:ecC}
		if $-u$ is a viscosity super-solution. Finally, $u$ is a 
		viscosity solution if it is both a viscosity
		super-solution and sub-solutions.
		
		\medskip
		
		To prove the following results, we borrow ideas and techniques
		of \cite[Proposition 11]{LL}. 
	
		\begin{lem}\label{lema:visco}
		 	Let $c\in C(\overline{\Omega}).$ 
		 	If $u\in \mwsp\cap C(\overline{\Omega})$
		 	is a weak super-solution of \eqref{eq:ecC} 
		 	such that $u\ge0$ in $\Omega^c$
		 	then $u$ is a viscosity super-solution of \eqref{eq:ecC}.
		\end{lem}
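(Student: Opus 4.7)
The plan is to verify, in turn, the four conditions (VS1)--(VS4) that define a viscosity super-solution. Three of them are essentially immediate from the hypotheses. Condition (VS2) follows from $u\in C(\overline{\Omega})$. Condition (VS1) holds because $u$ is continuous on $\overline{\Omega}$ (hence finite there) and $u\in\mwsp$ forces $u$ to be locally integrable, so finite almost everywhere in $\R^N$. Condition (VS4) holds because $u\ge 0$ on $\Omega^c$ makes $u_-$ supported in the compact set $\overline{\Omega}$ and bounded on it, so $u_-\in L_{s,p}(\R^N)$ trivially.

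The substance of the proof is (VS3). Given $\phi\in C^2(B_r(x_0))$ with $\phi(x_0)=u(x_0)$, $\phi\le u$ in $B_r(x_0)$, and one of the alternatives (a) or (b), I would argue by contradiction and assume $(-\Delta_p)^s\phi_r(x_0)<c(x_0)u(x_0)^{p-1}$. The regularity assumptions in (a)/(b) are precisely what is needed so that the principal-value integral defining $(-\Delta_p)^s\phi_r$ at $x_0$ converges absolutely and depends continuously on the base point; together with continuity of $c$ and $u$, this yields $\rho\in(0,r)$ with $(-\Delta_p)^s\phi_r(x)<c(x)u(x)^{p-1}$ throughout $B_\rho(x_0)$. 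Replacing $\phi$ by $\widetilde\phi(x)=\phi(x)-\gamma|x-x_0|^2$ with $\gamma>0$ small preserves the touching condition and the regularity hypotheses in (a)/(b), while providing a quantitative separation $u-\widetilde\phi\ge\gamma\rho^2$ on $\partial B_\rho(x_0)$ (after shrinking $\rho$ so the pointwise inequality still holds).

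Next, for $0<\epsilon<\gamma\rho^2$ I would introduce the competitor
\[
	v(x)=\begin{cases} \widetilde\phi(x)+\epsilon & x\in B_\rho(x_0),\\ u(x) & x\notin B_\rho(x_0),\end{cases}
\]
which lies in $\mwsp$, coincides with $u$ on $B_\rho(x_0)^c$, and satisfies $v(x_0)>u(x_0)$. A direct computation in the spirit of Lemma \ref{lema:nonlocalbehavior} shows that $(-\Delta_p)^s v(x)$ differs from $(-\Delta_p)^s\widetilde\phi_r(x)$ by a tail contribution vanishing with $\epsilon$, so after a further reduction of $\epsilon$ and $\rho$ one still has the pointwise bound $(-\Delta_p)^s v(x)<c(x)u(x)^{p-1}$ in $B_\rho(x_0)$. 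Using the sign of $c$ together with the fact that $v\ge u$ where relevant, this upgrades to $(-\Delta_p)^s v(x)<c(x)v(x)^{p-1}$, so $v$ is a classical, and thus weak, sub-solution of \eqref{eq:ecC} on $B_\rho(x_0)$. Since $v=u$ outside $B_\rho(x_0)$, the comparison principle of Proposition \ref{pro:cp2}, applied on the bounded domain $B_\rho(x_0)\subset\subset\Omega$, gives $v\le u$ a.e., contradicting $v(x_0)>u(x_0)$.

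The main obstacle is establishing the continuity of $(-\Delta_p)^s\phi_r(\cdot)$ at $x_0$ in case (b), where $\nabla\phi(x_0)=0$ and the refined $C^2_\beta$ hypothesis is essential to control the singularity of the integrand; producing the strict pointwise bound on a full ball that remains stable under the $O(\epsilon)$ tail perturbation requires modulus-of-continuity estimates of the type carried out in \cite{KKL, LL}. A secondary technical point is the case analysis on the sign of $u(x_0)$ and on whether $u$ vanishes in $B_\rho(x_0)$: it must be arranged so that the upgrade from $c(x)u(x)^{p-1}$ to $c(x)v(x)^{p-1}$ on the right-hand side, and the subsequent application of Proposition \ref{pro:cp2}, remain valid.
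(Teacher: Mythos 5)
Your verification of (VS1), (VS2), (VS4) and the overall contradiction scheme for (VS3) (perturb the test function upward near $x_0$, keep the strict pointwise inequality on a small ball, compare, contradict at $x_0$) match the paper's argument, which implements the perturbation via the bump-function lemma of \cite{KKL} rather than your paraboloid-plus-constant construction. However, there is a genuine gap in your final comparison step. You try to upgrade the pointwise inequality $(-\Delta_p)^s v < c(x)u(x)^{p-1}$ to $(-\Delta_p)^s v < c(x)v(x)^{p-1}$, so as to make $v$ a sub-solution of \eqref{eq:ecC} and invoke Proposition \ref{pro:cp2}. This fails for two reasons. First, the inequality $c(x)u(x)^{p-1}\le c(x)v(x)^{p-1}$ requires an ordering between $u$ and $v$ combined with a sign of $c$: you would need $v\ge u$ where $c\ge0$ and $v\le u$ where $c\le0$, and neither ordering holds on all of $B_\rho(x_0)$ (your $v$ exceeds $u$ at $x_0$ but lies below $u+\epsilon$ elsewhere, with no uniform comparison). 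Second, the present lemma makes \emph{no} sign assumption on $c$ and does not assume $u\ge0$ in $\Omega$, whereas Proposition \ref{pro:cp2} requires $c\le0$ and both functions nonnegative; so even if the upgrade worked, the comparison principle you cite is not applicable here.

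The correct route — the one the paper takes — never compares $c\,u^{p-1}$ with $c\,v^{p-1}$. One converts the pointwise inequality $(-\Delta_p)^s v < c(x)u(x)^{p-1}$ in $B_\rho(x_0)$ into the weak form (via \cite[Lemma 10]{LL}) and chains it with the weak super-solution inequality satisfied by $u$ \emph{for the same right-hand side} $c(x)u(x)^{p-1}$: for every nonnegative $\varphi\in\widetilde{W}^{s,p}(B_\rho(x_0))$ the nonlocal form of $v$ against $\varphi$ is $\le \int c\,u^{p-1}\varphi \le$ the nonlocal form of $u$ against $\varphi$. This puts you in the hypotheses of the sign-free comparison principle, Proposition \ref{pro:cp}, which yields $u\ge v$ in $B_\rho(x_0)$ and the desired contradiction at $x_0$. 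A secondary caveat: in case (b) your replacement $\widetilde\phi=\phi-\gamma|x-x_0|^2$ must be checked to remain in the admissible class $C^2_\beta$ with $\beta>\nicefrac{sp}{p-1}$ (the quadratic term has the wrong homogeneity when $\beta>2$), and your description of the $\epsilon$-lift as a ``tail contribution'' is inaccurate since the modification occurs at and near the base point; both difficulties are exactly what \cite[Lemma 3.9]{KKL} is designed to handle, and the paper simply invokes it.
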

		\begin{proof}
			Let's observe that, by our assumptions, $u$ satisfies 
		 	(VS1),(VS2) and (VS4). Thus, we only need verify 
		 	property (VS3). We prove it by contradiction. 
		 	Suppose the conclusion in the lemma is
			false. Then there exist $x_0\in\Omega,$ and
			$\phi\in C^2(B_r(x_0))$ such that 
			\begin{itemize}
				\item $\phi(x_0)=u(x_0),$ and
					$\phi\le u \text{ in } B_r(x_0)\subset\Omega;$
				\item Either (a) or (b) in (VS3) holds;	
				\item $(-\Delta_p)^s\phi_r(x_0)< c(x_0)
					|\phi_r(x_0)|^{p-2}\phi_r(x_0)
					=c(x_0)|u(x_0)|^{p-2}u(x_0).$		
			\end{itemize}
			Then, by continuity (see \cite[Lemma 3.8]{KKL}), 
			there is $\delta\in(0,r)$ such that
			\[
					(-\Delta_p)^s\phi_r(x)< c(x)u(x)^{p-1}
			\]
			for all $x\in B_{\delta}(x_0).$
			
			By \cite[Lemma 3.9]{KKL}, there exist  $\theta>0,$ 
			$\rho\in(0,\nicefrac{\delta}2)$ and $\mu\in 
			C_0^2(B_{\nicefrac\rho2}(x_0))$ with $0\le\mu\le1$
			and $\mu(x_0)=1$ such that 
			$v=\phi_r+\theta\mu$ satisfies 
			\[
				\sup_{B_\rho(x_0)}|(-\Delta_p)^s\phi_r(x)-
				(-\Delta_p)^s v(x)|<
				\inf_{B_{\nicefrac{\delta}2}(x_0)}
				c(x)u(x)^{p-1}- (-\Delta_p)^s\phi_r(x).
			\]
			Then
			\[
				(-\Delta_p)^s v(x)< c(x)u(x)^{p-1}
			\]
			for all $x\in {B_\rho(x_0)}.$ 
			Then, $v=\phi_r 
			\le u$ in $B_{\rho}(x_0)^c$
			and by \cite[Lemma 10]{LL}, 
			for any $\varphi\in\widetilde{W}^{s,p}(B_\rho(x_0)),$ 
			$\varphi\ge0$
			\begin{align*}
				\int_{\R^{2N}}&
				\dfrac{(u(x)-u(y))^{p-1}(\varphi(x)-\varphi(y))}
				{|x-y|^{N+sp}}dxdy\ge\\
				&\qquad \int_{\R^{2N}}
				\dfrac{(v(x)-
				v(y))^{p-1}(\varphi(x)-\varphi(y))}
				{|x-y|^{N+sp}}dxdy.
			\end{align*}
				
		Therefore, by Proposition \ref{pro:cp}, $u\ge v$ in $B_\rho.$ Thus
		$u(x_0)=\phi_r(x_0)>\phi_r(x_0)+\theta =v(x_0)$ 
		which is a contradiction.
	\end{proof}

\section{Strong minimum principle}\label{SMP}

	Let us now  prove a strong minimum principle for 
	weak super-solutions of \eqref{eq:ecC}. To this end, we follow the 
	ideas in \cite{BF} and prove first the next logarithmic lemma 
	(see \cite[Lemma 1.3]{MR3542614}).
	
	\begin{lem}\label{lema:DKP}
		Let $c\in L^{1}_{loc}(\Omega),$ 
		and $u\in\mwsp$ be a weak super-solution of  
		\eqref{eq:ecC}. If   $u\geq 0$ a.e. in 
		$B_R(x_0)\subset\subset\Omega$ 
		then for any $B_r=B_r(x_0)\subset B_{\nicefrac{R}2}(x_0)$ 
		and $0<h<1$ we have that
		\begin{align*}
  			\int_{B_r^2}& \dfrac{1}{|x-y|^{N+sp}} 
  			\left| \log \left(\dfrac{u(x)+h}{u(y)+h}
  			\right)\right|^p \, d x d y 
  			\le \\
  			& Cr^{N-sp} \left\{h^{1-p}r^{sp}
  			\int_{B_{2r}^c} 
  			\dfrac{u_{-}(y)^{p-1}}{|y-x_0|^{N+sp}}\, d y + 1 
  			\right\} + \|c\|_{L^{1}(B_{2r})},
		\end{align*}
		where $C$ depends only on $N,s,$ and $p$.
	\end{lem}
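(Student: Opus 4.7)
The plan is to adapt the classical Di Castro--Kuusi--Palatucci (DKP) argument. I would test the weak super-solution inequality with
\[
    \varphi(x) = \eta(x)^p\,(u(x)+h)^{1-p},
\]
where $\eta \in C_c^\infty(B_{3r/2}(x_0))$ is a cutoff satisfying $0 \le \eta \le 1$, $\eta \equiv 1$ on $B_r$, and $|\nabla \eta| \le C/r$. Since $B_{3r/2} \subset\subset B_R$ and $u \ge 0$ a.e.\ in $B_R$, on $\supp(\eta)$ we have $u+h \ge h > 0$ a.e., so $\varphi$ is a bounded, nonnegative function in $\widetilde{W}^{s,p}(B_{3r/2})$, hence admissible (the map $t \mapsto (t+h)^{1-p}$ is Lipschitz on $[0,\infty)$).

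The super-solution inequality then reads
\[
    I_1 + 2J \ge \int_{B_{3r/2}} c(x)\, u(x)^{p-1}\varphi(x)\, dx,
\]
with $I_1$ the integral of $|x-y|^{-(N+sp)}(u(x)-u(y))^{p-1}(\varphi(x)-\varphi(y))$ over $B_{2r}^2$ and $2J$ the symmetric tail over $B_{2r}\times B_{2r}^c$; the contribution over $B_{2r}^c\times B_{2r}^c$ vanishes since $\varphi \equiv 0$ there. To $I_1$ I would apply the pointwise DKP-type inequality (see \cite[Lemma 1.3]{MR3542614}): for $a,b,\eta_1,\eta_2 \ge 0$,
\[
    (a-b)^{p-1}\!\left(\frac{\eta_1^p}{(a+h)^{p-1}}-\frac{\eta_2^p}{(b+h)^{p-1}}\right)
    \le C_1|\eta_1-\eta_2|^p - C_2\max(\eta_1,\eta_2)^p\!\left|\log\frac{a+h}{b+h}\right|^p.
\]
Integrating against the kernel over $B_{2r}^2$ and using $\eta \equiv 1$ on $B_r$ yields
\[
    C_2 \int_{B_r^2} \frac{1}{|x-y|^{N+sp}}\!\left|\log\frac{u(x)+h}{u(y)+h}\right|^p dxdy \le -I_1 + Cr^{N-sp},
\]
where $Cr^{N-sp}$ absorbs the standard cutoff estimate $\int_{B_{2r}^2}|x-y|^{-(N+sp)}|\eta(x)-\eta(y)|^p\,dxdy \le Cr^{N-sp}$.

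It remains to bound $-I_1$ from above. From the super-solution inequality, $-I_1 \le 2J - \int c\,u^{p-1}\varphi\,dx$. Since $u^{p-1}(u+h)^{1-p} \le 1$, the $c$-term integrand satisfies $|c\,u^{p-1}\varphi| \le |c|\eta^p \le |c|$, contributing $\le \|c\|_{L^1(B_{2r})}$. For the tail $J$, exploiting $u(x) \ge 0$ on $B_{3r/2}$ I would use the elementary bound
\[
    (u(x)-u(y))^{p-1} \le C_p\bigl((u(x)+h)^{p-1} + u_-(y)^{p-1}\bigr),
\]
proved by distinguishing $u(y)\ge 0$ (where $u(x)-u(y) \le u(x)+h$) and $u(y)<0$ (where $u(x)-u(y)=u(x)+u_-(y)$ and one uses subadditivity of $t \mapsto t^{p-1}$ up to a constant $C_p$). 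The first summand paired with $\varphi(x) \le \eta(x)^p(u(x)+h)^{1-p}$ yields an integrand $\le \eta(x)^p$, which, combined with the uniform bound $\sup_{x\in B_{3r/2}}\int_{B_{2r}^c}|x-y|^{-(N+sp)}dy \le Cr^{-sp}$, contributes $\le Cr^{N-sp}$. The second summand, bounded via $\varphi \le h^{1-p}$ and $|x-y|^{-(N+sp)} \le C|y-x_0|^{-(N+sp)}$ for $x\in B_{3r/2}$, $y\in B_{2r}^c$, contributes at most
\[
    C\, h^{1-p} r^N \int_{B_{2r}^c} \frac{u_-(y)^{p-1}}{|y-x_0|^{N+sp}}\, dy.
\]

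Assembling everything and rewriting $h^{1-p}r^N = h^{1-p}r^{sp}\cdot r^{N-sp}$ yields the claimed inequality. The main obstacle is the DKP pointwise inequality itself, whose proof is algebraic and delicate; once it is in hand, the argument is bookkeeping of nonlocal tails, the key conceptual point being that nonnegativity of $u$ on $B_R$ lets one absorb the $u_+$ portion of the tail into the harmless $Cr^{N-sp}$ term, retaining only the genuine $u_-(y)^{p-1}$ tail in the bound.
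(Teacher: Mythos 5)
Your proposal is correct and follows essentially the same route as the paper: test the super-solution inequality with $\varphi=\eta^p(u+h)^{1-p}$, handle the zero-order term via $0\le u^{p-1}(u+h)^{1-p}\eta^p\le 1$, and bound the remaining nonlocal terms as in the proof of Lemma 1.3 of Di Castro--Kuusi--Palatucci. The only difference is that the paper simply cites that reference for the estimate of the right-hand side, whereas you reproduce its pointwise inequality, cutoff estimate, and tail decomposition in detail.
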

	
	\begin{proof}
		Let $0<r<\nicefrac{R}{2},$ $0<h<1$ and 
		$\phi\in C_0^\infty( B_{\nicefrac{3r}2})$ be such that 	
		\[
		  0\le \phi \le 1, \quad \phi\equiv1 \text{ in } B_r 
		  \quad \text{ and } 
		  \quad|D\phi|<Cr^{-1} 
		  \text{ in } B_{\nicefrac{3r}2}\subset B_{R}. 
		\]
		Since $v=(u+h)^{1-p}\phi^p\in\wwsp$ 
		and $u$ is a super-solution of \eqref{eq:ecC},
		we have that
		\begin{equation}\label{eq:DKP}
  			\begin{aligned}
    				&\int_{B_{\nicefrac{3r}2}}c(x)
    				\frac{u^{p-1}(x)\phi^p(x)}{(u(x)+h)^{p-1}}d x\\ 
    				&\le\int_{\R^{2N}}
    				\dfrac{(u(x)-u(y))^{p-1}}{|x-y|^{N+ps}}\left(
   					\frac{\phi^p(x)}{(u(x)+h)^{p-1}}
   					-\frac{\phi^p(y)}{(u(y)+h)^{p-1}}
   				\right)dxdy. 
			\end{aligned}
		\end{equation}
		In the proof of Lemma 1.3 in \cite{MR3542614}, 
		it is showed that right  side of the above inequality is bounded 
		by
		\begin{align*}
  			\ Cr^{N-sp}& \left\{h^{1-p}r^{sp}\int_{ (B_{2r})^c} 
  			\dfrac{(u_{-}(y))^{p-1}}{|y-x_0|^{N+sp}}\, d y+1 \right\}\\
  			&\qquad\qquad -\int_{(B_r)^2} \dfrac{1}{|x-y|^{N+sp}} \left| 
  			\log\left(\dfrac{u(x)+h}{u(y)+h}\right)\right|^p \, dxdy,
		\end{align*}
		where $C$ depends only on $N,s,$ and $p.$ 
		Then, by \eqref{eq:DKP} and using that 
		$0\le u^{p-1}(u+h)^{1-p}\phi^p\le1$ in 
		$B_{\nicefrac{3r}{2}},$ the lemma holds.
	\end{proof}
	
	\begin{lem}\label{lema:mprinc}
		Let {$c \in L^{1}_{loc}(\Omega)$} be a non-positive function 
		and  $u$ be a weak super-solution of  
		\eqref{eq:ecC}. Then,
		\begin{enumerate}
			\item[(a)] If $u\ge0$ a.e. in $\Omega^c$ and 
				$u=0$ a.e. in $\Omega$ then $u=0$ a.e. 
				in $\R^N.$ 
			\item[(b)] If $\Omega$ is bounded,  
				and $u\ge0$ a.e. in $\Omega^c$ then 
				$u\ge0$ a.e. in $\Omega.$ 
		\end{enumerate} 
		
	\end{lem}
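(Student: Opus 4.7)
The strategy for both parts is to test the super-solution inequality against a suitable nonnegative $\varphi\in\wwsp$ and exploit the sign of $c$ together with the hypothesis $u\ge 0$ a.e.\ in $\Omega^c$. For part (a), I take an arbitrary such $\varphi$. Since $u\equiv 0$ a.e.\ in $\Omega$, the right-hand side $\int_\Omega c(x)|u|^{p-2}u\,\varphi\,dx$ vanishes. On the left-hand side, the integrand is zero on $\Omega^2$ (as $u\equiv 0$ there) and on $(\Omega^c)^2$ (as $\varphi\equiv 0$ there); on $\Omega\times\Omega^c$ it equals $-u(y)^{p-1}\varphi(x)/|x-y|^{N+sp}$. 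Using the $(x,y)$-symmetry of the integrand, the super-solution inequality reduces to
\[
\int_\Omega\int_{\Omega^c}\frac{u(y)^{p-1}\varphi(x)}{|x-y|^{N+sp}}\,dy\,dx\le 0.
\]
The integrand is nonnegative by hypothesis, so it must vanish a.e. Choosing $\varphi\in C_c^\infty(\Omega)$ with $\varphi>0$ on an arbitrary open subset of $\Omega$ forces $u\equiv 0$ a.e.\ in $\Omega^c$, completing (a).

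For part (b), the natural choice is $\varphi=u_-$, which lies in $\wwsp$ since $u\in\mwsp$, $\Omega$ is bounded, and $u_-\equiv 0$ in $\Omega^c$ (admissibility handled in the same spirit as in the proof of Proposition \ref{pro:cp2}). The assumption $c\le 0$ gives
\[
\int_\Omega c(x)|u|^{p-2}u\cdot u_-\,dx=-\int_\Omega c(x)\,u_-(x)^p\,dx\ge 0,
\]
so the right-hand side of the super-solution inequality is nonnegative. The key pointwise inequality, valid for all $a,b\in\R$ with the convention $t^{p-1}=|t|^{p-2}t$,
\[
(a-b)^{p-1}(a_- -b_-)\le -|a_- -b_-|^p,
\]
verified by splitting on the signs of $a$ and $b$, bounds the left-hand side above by
\[
-\int_{\R^{2N}}\frac{|u_-(x)-u_-(y)|^p}{|x-y|^{N+sp}}\,dx\,dy.
\]
Combining the two estimates forces this Gagliardo seminorm of $u_-$ over $\R^N$ to vanish, so $u_-$ is constant a.e.; since $u_-\equiv 0$ on the positive-measure set $\Omega^c$, I conclude $u_-\equiv 0$, giving (b).

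The principal obstacle is the pointwise inequality above. The cases where $a$ and $b$ have the same sign are immediate: both sides vanish when $a,b\ge 0$, and equality holds when $a,b<0$ since then $a_- -b_- = -(a-b)$. The mixed-sign cases, say $a\ge 0>b$, require the elementary estimate $(a-b)^{p-1}=(a+|b|)^{p-1}\ge |b|^{p-1}$ so that $(a-b)^{p-1}(a_- -b_-) = -(a+|b|)^{p-1}|b|\le -|b|^p=-|a_- -b_-|^p$. A secondary concern is the admissibility of $u_-$ as a test function in $\wwsp$, which is standard given that $u_-$ is supported in the bounded set $\overline{\Omega}$ and that the analogous claim is used without comment in Proposition \ref{pro:cp2}.
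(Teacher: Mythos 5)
Your proposal is correct and follows essentially the same route as the paper: part (a) is the identical computation with a nonnegative test function supported in $\Omega$, and part (b) tests with $u_-\in\wwsp$, uses $c\le 0$ to make the right-hand side nonnegative, and bounds the left-hand side above by a nonpositive quantity via a pointwise sign inequality. Your unified estimate $(u(x)-u(y))^{p-1}(u_-(x)-u_-(y))\le -|u_-(x)-u_-(y)|^p$ is a slightly cleaner packaging of the case analysis the paper carries out over $\Omega_-^2$ and $\Omega_-\times\Omega_-^c$, but the argument is the same.
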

	\begin{proof}
		First, we prove (a). Let $\varphi\in C^{\infty}_0(\Omega)$ be 
		non-negative function, then
		\begin{align*}
			0&\le\int_{\R^{2N}}\dfrac{(u(x)-u(y))^{p-1}
			(\varphi(x)-\varphi(y))}{|x-y|^{N+sp}}dxdy\\
			&=-2\int_{\Omega^c\times\Omega}
			\dfrac{u(y)^{p-1}
			\varphi(x)}{|x-y|^{N+sp}}dxdy
		\end{align*}
		due to $u=0$ a.e. in $\Omega.$
		Thus, since $u\ge0$ a.e. in $\Omega^c$ then
		$u=0$ a.e. in $\Omega^c.$ Hence $u=0$ a.e. in $\R^N.$
		
		Now we prove (b). 
		Since $u\in\mwsp$ and $u\ge 0$ in $\Omega^c$  
		we have that $u_{-}\in\wwsp.$ Then 
		\begin{align*}
			0&\le -\int_{\Omega}c(x)(u_{-}(x))^p dx=
			\int_{\Omega}c(x)(u(x))^{p-1}u_{-}(x) dx\\
			&\le \int_{\R^{2N}}
			\dfrac{(u(x)-u(y))^{p-1}(u_{-}(x)-u_{-}(y))}{|x-y|^{N+sp}}  dx
		\end{align*}
		owing to $c(x)\le0$ in $\Omega$ and 
		$u$ is a weak super-solution of  \eqref{eq:ecC}.
		Observe that
		\begin{align*}
			(u(x)-u(y))^{p-1}&(u_{-}(x)-u_{-}(y))\le\\
			&\le
			\begin{cases}
				-(u_{-}(x)-u_{-}(y))^p & \mbox{if } x,y\in \Omega_{-},\\
				-(u_{-}(x)+u_{+}(y))^{p-1}u_{-}(x) & \mbox{if } x\in 
				\Omega_{-}, 
				y\in \Omega_{-}^c 
			\end{cases}
		\end{align*}
		consequently
		\begin{align*}
			0&\le \int_{\R^{2N}}
			\dfrac{(u(x)-u(y))^{p-1}(u_{-}(x)-
			u_{-}(y))}{|x-y|^{N+sp}}  dx\\
				&\le-\int_{\Omega_{-}^2}
				\dfrac{|u_{-}(x)-u_{-}(y)|^p}{|x-y|^{N+sp}}
				\, dxdy
					-2\int_{\Omega_{-}\times \Omega_{-}^c}
					\dfrac{(u_{-}(x)+
					u_{+}(y))^{p-1}u_{-}(x)}{|x-y|^{N+sp}}\, 
					dxdy\\
				&\le0.
		\end{align*}
		Therefore $u_{-}\equiv0$ a.e. in $\R^N.$ Then in both 
		cases we have that $u\ge0$ a.e. in $\R^N.$ 
	\end{proof}
	
	Now, we prove our strong minimum principle under the 
	assumption that $\Omega$ is connected.	
	
	\begin{lem}\label{lema:mprinciple} 
		Let {$c \in L^{1}_{loc}(\Omega)$} be a non-positive function 
		and  $u\in\mwsp$ be a weak super-solution of  
		\eqref{eq:ecC}. 
		\begin{enumerate}
			\item If $\Omega$ is bounded and connected,  
				and $u\ge0$ a.e in $\Omega^c$ then either 
				$u>0$ { a.e.} in $\Omega$ or $u=0$ a.e. 
				in $\Omega.$ 
			\item  If $\Omega$ is connected, 
				$u\ge0$ a.e in $\mathbb{R}^N$ then either 
				$u>0$ {a.e.} in $\Omega$ or $u=0$ a.e. in $\Omega.$ 
		\end{enumerate}  		
	\end{lem}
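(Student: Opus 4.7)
My plan is to combine the logarithmic estimate of Lemma~\ref{lema:DKP} with a connectedness argument. First, I would reduce both cases to the situation $u\ge 0$ a.e.\ in $\R^N$: in case (1), Lemma~\ref{lema:mprinc}(b) gives $u\ge 0$ a.e.\ in $\Omega$ which, combined with the hypothesis on $\Omega^c$, yields $u\ge 0$ a.e.\ in $\R^N$; in case (2) this is assumed directly. In particular, $u_-\equiv 0$ a.e., so for every ball $B_r(x_0)$ with $B_{2r}(x_0)\subset\subset\Omega$, Lemma~\ref{lema:DKP} gives
\[
\int_{B_r(x_0)^2} \frac{1}{|x-y|^{N+sp}} \left|\log\!\left(\frac{u(x)+h}{u(y)+h}\right)\right|^{p}\,dxdy \le C r^{N-sp} + \|c\|_{L^1(B_{2r}(x_0))}
\]
uniformly in $h\in(0,1)$.

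Next, I would prove the following local dichotomy: for every $B_r(x_0)\subset\subset\Omega$ as above, either $u=0$ a.e.\ in $B_r(x_0)$ or $u>0$ a.e.\ in $B_r(x_0)$. Suppose on the contrary that $A\coloneqq\{x\in B_r(x_0):u(x)=0\}$ and $B\coloneqq\{y\in B_r(x_0):u(y)>0\}$ both have positive measure. For $(x,y)\in A\times B$ the integrand equals $|x-y|^{-N-sp}\log^p(1+u(y)/h)$, which tends to $+\infty$ as $h\to 0^+$ since $u(y)>0$. Fatou's lemma then forces the left-hand side of the estimate above to blow up as $h\to 0^+$, contradicting the uniform bound. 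The factor $|x-y|^{-N-sp}$ is bounded below on $B_r\times B_r$ by $(2r)^{-N-sp}>0$, so there is no issue with integrability.

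Finally, I would upgrade this local dichotomy to a global one using connectedness. Define
\[
U\coloneqq\{x\in\Omega : u=0 \text{ a.e.\ in some ball } B_\rho(x)\subset\subset\Omega\}.
\]
Clearly $U$ is open. I claim $U$ is also closed in $\Omega$: if $x_n\to x_0\in\Omega$ with $x_n\in U$, pick any $B_r(x_0)\subset\subset\Omega$; for $n$ large the set where $u=0$ has positive measure in $B_r(x_0)$ (since some $B_{\rho_n}(x_n)$ intersects it in positive measure and $u$ vanishes a.e.\ there), so by the dichotomy $u=0$ a.e.\ in $B_r(x_0)$, whence $x_0\in U$. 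Connectedness of $\Omega$ gives $U=\Omega$ or $U=\emptyset$. In the first case, covering $\Omega$ by such balls yields $u=0$ a.e.\ in $\Omega$. In the second case, every ball $B_r(x_0)\subset\subset\Omega$ fails $u\equiv 0$ a.e., so by the dichotomy $u>0$ a.e.\ in each such ball, and covering gives $u>0$ a.e.\ in $\Omega$.

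\textbf{Main obstacle.} The delicate point is the Fatou step: one has to verify that the uniform $h$-bound from Lemma~\ref{lema:DKP} is genuinely finite (which requires $u_-\equiv 0$, hence the preliminary reduction via Lemma~\ref{lema:mprinc}) and that the pointwise divergence of $\log^p(1+u(y)/h)$ on a set of positive product measure is strong enough to contradict the estimate. The connectedness argument is standard once the local dichotomy is in hand.
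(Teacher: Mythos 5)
Your proposal is correct and follows essentially the same route as the paper: the paper also first reduces to $u\ge 0$ a.e.\ in $\R^N$ via Lemma~\ref{lema:mprinc} and then invokes Lemma~\ref{lema:DKP} together with the argument of \cite[Theorem A.1]{BF}, which is precisely the local ``$u=0$ a.e.\ or $u>0$ a.e.\ in each ball'' dichotomy obtained by letting $h\to 0^+$ in the logarithmic estimate, followed by a connectedness argument. The only cosmetic difference is that your open--closed argument treats the unbounded case directly, whereas the paper handles it by exhausting $\Omega$ with bounded connected subdomains.
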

	\begin{proof}
		By Lemma \ref{lema:mprinc}, $u\ge0$ a.e. in $\mathbb{R}^N$.
		
		Proceeding as in the proof of Theorem A.1 
		in \cite{BF} and using Lemma \ref{lema:DKP}, we have that
		If $\Omega$ is bounded and connected $u\not=0$ a.e. in $\Omega$, 
		then $u>0$ a.e. in $\Omega.$
		
		\medskip
		
		If $\Omega$ is unbounded and connected, then there is a 
		sequence of bounded connected open sets 
		$\{\Omega_n\}_{n\in\mathbb{N}}$ such that 
		$\Omega_n\subset\Omega_{n+1}\subset\Omega$  
		for all $n\in\mathbb{N}$ and 
		$\Omega=\cup_{n\in\mathbb{N}}\Omega_{n}.$ If $u\not = 0$
		a.e. in $\Omega$ then there is 
		$n_{0}\in\mathbb{N}$ such that $u\not= 0$
		a.e. in $\Omega_n$ for all $n\ge n_0.$ Thus $u>0$ a.e. 
		in $\Omega_n$ for all 
		$n\ge n_0,$ since for all $n\ge n_0$ we have that $\Omega_n$ 
		is a bounded conected open set, $u$ is be a nonnegative 
		weak super-solution of $(-\Delta)_{p}^s u = c(x)u^{p-1}$ 
		in $\Omega_n$ and $u\not=0$ a.e. in $\Omega_n.$ Therefore $u>0$
		a.e in $\Omega.$

	\end{proof}
	
		In fact, as our operator is non-local,  
		we do not need to assume that the domain is connected.
	
	\begin{lem}\label{lema:mprinciple2} 
		Let {$c \in L^{1}_{loc}(\Omega)$} be a non-positive function 
		and  $u\in\mwsp$ be a weak super-solution of  
		\eqref{eq:ecC}. 		
		\begin{enumerate}
			\item If $\Omega$ is bounded,  
				and $u\ge0$ a.e. in $\Omega^c$ then either 
				$u>0$ {a.e.} in $\Omega$ or $u=0$ a.e. in $\Omega.$
			\item  If  $u\ge0$ a.e. in $\mathbb{R}^N$ then either 
				$u>0$ {a.e.} in $\Omega$ or $u=0$ a.e. in $\Omega.$
		\end{enumerate}  				
	\end{lem}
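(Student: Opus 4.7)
The plan is to reduce to the connected case, Lemma \ref{lema:mprinciple}, by decomposing $\Omega$ into its (at most countably many) connected components, and then to exploit the \emph{nonlocal} character of $(-\Delta_p)^s$ to exclude the case where $u$ vanishes on some components while being positive on others. This is precisely where the connectedness hypothesis can be dropped for free.

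First I would apply Lemma \ref{lema:mprinc} to conclude that $u\ge 0$ a.e.\ in $\mathbb{R}^N$ in both cases (1) and (2). Write $\Omega=\bigsqcup_{i\in I}\Omega_i$ with each $\Omega_i$ open and connected (in case (1) each $\Omega_i$ is additionally bounded). Since $u\in\mwsp$ is a weak super-solution of \eqref{eq:ecC} in $\Omega$ and $u\ge 0$ a.e.\ in $\mathbb{R}^N\supset\Omega_i^c$, restricting the class of admissible test functions shows that $u$ is also a weak super-solution on each $\Omega_i$. Lemma \ref{lema:mprinciple} therefore gives the per-component dichotomy: for each $i\in I$ either $u>0$ a.e.\ in $\Omega_i$ or $u\equiv 0$ a.e.\ in $\Omega_i$.

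Suppose, for contradiction, that the global dichotomy fails; then there exist components $\Omega_0$ and $\Omega_1$ with $u\equiv 0$ a.e.\ in $\Omega_0$ and $u>0$ a.e.\ in $\Omega_1$. Pick $\varphi\in C_0^\infty(\Omega_0)$ nonnegative and not identically zero; since $C_0^\infty(\Omega_0)\subset C_0^\infty(\Omega)\subset\wwsp$, $\varphi$ is admissible in the super-solution inequality. Because $u=0$ a.e.\ on $\supp\varphi\subset\Omega_0$, the right-hand side $\int_\Omega c(x) u(x)^{p-1}\varphi(x)\,dx$ vanishes. Splitting $\mathbb{R}^{2N}$ into $\Omega_0^2$, $(\Omega_0^c)^2$, and the two cross pieces, the first piece contributes zero ($u\equiv 0$ a.e.), the second contributes zero ($\varphi\equiv 0$), and the two cross pieces combine by symmetry to yield
\[
	-2\int_{\Omega_0\times \Omega_0^c}
	\dfrac{u(y)^{p-1}\varphi(x)}{|x-y|^{N+sp}}\,dx\,dy \;\ge\;0,
\]
where the sign uses the convention $t^{p-1}=|t|^{p-2}t$ together with $u\ge 0$ in $\mathbb{R}^N$.

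Since the integrand is pointwise non-negative, the inequality forces it to vanish for a.e.\ $(x,y)\in\Omega_0\times\Omega_0^c$. Choosing $\varphi$ strictly positive on some ball inside $\Omega_0$ makes $\int_{\Omega_0}\varphi(x)|x-y|^{-N-sp}\,dx>0$ for \emph{every} $y\in\Omega_0^c$, so we must have $u(y)=0$ for a.e.\ $y\in\Omega_0^c\supset\Omega_1$, contradicting $u>0$ a.e.\ in $\Omega_1$. Hence only the two pure alternatives survive, which is the claim. The only delicate point is the correct accounting of the sign in $(u(x)-u(y))^{p-1}$ on the cross-terms; once that is settled the argument is a clean combination of the connected-case dichotomy and a nonlocal test-function computation.
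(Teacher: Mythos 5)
Your proof is correct and follows essentially the same route as the paper: reduce to the connected case via Lemma \ref{lema:mprinciple}, then use a nonnegative test function supported in a component where $u$ vanishes to derive $-2\int u(y)^{p-1}\varphi(x)|x-y|^{-N-sp}\,dx\,dy\ge 0$, forcing $u\equiv 0$ a.e.\ in $\mathbb{R}^N$ and hence ruling out mixed components. The sign bookkeeping on the cross terms and the per-component reduction are exactly as in the paper's argument.
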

	
	\begin{proof}
		By Lemma \ref{lema:mprinciple}, we only need to 
		show that $u\neq0$ a.e in $\Omega$ if only if 
		$u\not\neq 0$  a.e. in all connected components of $\Omega.$
		That is, we only need  to show that if
		$u\not\equiv0$ in $\Omega$ then $u\not\equiv0$ 
		in all connected components of $\Omega.$
		
		Suppose, on the contrary, that there is a connected component 
		$U$ of $\Omega$ such that $u=0$ a.e. in $U.$ 
		Since $u$ is a weak super-solution of \eqref{eq:ecC},
		it follows from Lemma \ref{lema:mprinc} that $u\ge0$ in $\R^N.$
		Moreover, for any nonnegative function 
		$\varphi \in \widetilde{W}^{s,p}(\Omega)$ we get
		\begin{align*}
			0&\le\int_{\R^{2N}}\dfrac{(u(x)-u(y))^{p-1}
			(\varphi(x)-\varphi(y))}
			{|x-y|^{N+sp}}
			dxdy\\
			&=-2\int_{U}\int_{U^c}
			\dfrac{(u(x))^{p-1}
			\varphi(y)}
			{|x-y|^{N+sp}}
			dxdy
		\end{align*}
		due to $u=0$ a.e. in $U.$ Then $u=0$ a.e. in $U^c,$
		that is $u=0$ a.e. in $\R^N,$ 
		which is a contradiction to the fact that
		$u\not=0$ a.e. in $\Omega.$
	\end{proof}

	Then, by Lemmas \ref{lema:mprinc} and \ref{lema:mprinciple2}, we get
	Theorem \ref{teo:mprinciple}. 
	
	\medskip

	To conclude this section, we prove Theorem 
	\ref{teo:mprincipleVis}. The key of the proof is 
	Theorem \ref{teo:mprinciple} and the next result.
		
	\begin{lem}\label{lema:positivity}
		Let $c\in C(\overline{B_R(x_0)})$ 
		and $u\in\mathcal{W}^{s,p}(B_R(x_0))\cap C(\overline{B_R(x_0)})$ 
		be a weak super-solution of 
		\begin{equation}\label{eq:ecCB}
			(-\Delta_p)^su=c(x)u^{p-1} 
			\quad \text{ in } B_R(x_0).
		\end{equation} 
		If $u\ge 0$ in $B_R(x_0)^c$ then either $u>0$ 
		in $B_R(x_0)$ or $u=0$ a.e. in $\R^N.$ 
	\end{lem}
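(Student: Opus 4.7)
The plan is to combine the a.e.\ minimum principle of Theorem~\ref{teo:mprinciple} with the viscosity characterization from Lemma~\ref{lema:visco}, and then exploit the nonlocal character of $(-\Delta_p)^s$ at a would-be zero of $u$ to derive a contradiction.

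First, I would apply Theorem~\ref{teo:mprinciple}(1) to the ball $B_R(x_0)$. This gives the dichotomy: either $u=0$ a.e.\ in $\mathbb{R}^N$ (and we are done), or $u>0$ a.e.\ in $B_R(x_0)$. In the second case, combined with $u\ge 0$ in $B_R(x_0)^c$, we have $u\ge 0$ a.e.\ in $\mathbb{R}^N$ and $u>0$ on a set of positive measure. Moreover, the continuity of $u$ on $\overline{B_R(x_0)}$ promotes the a.e.\ inequality $u\ge 0$ to an everywhere inequality on $\overline{B_R(x_0)}$.

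Next, suppose for contradiction that there exists $z\in B_R(x_0)$ with $u(z)=0$. By Lemma~\ref{lema:visco}, $u$ is a viscosity super-solution of \eqref{eq:ecCB}. I would construct a family of test functions $\phi_\varepsilon(x)=-\varepsilon|x-z|^{\alpha}$ on a small ball $B_r(z)\subset B_R(x_0)$, choosing $\alpha>\max\{2,\,sp/(p-1)\}$ so that $\phi_\varepsilon\in C^2(B_r(z))$ and, if $1<p\le 2/(2-s)$, the regularity requirement $\phi_\varepsilon\in C^2_\beta(B_r(z))$ of (VS3)(b) with $\beta=\alpha>sp/(p-1)$ is satisfied (with $z$ being the unique critical point of $\phi_\varepsilon$); if instead $p>2/(2-s)$, condition (VS3)(a) holds directly. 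Since $\phi_\varepsilon(z)=0=u(z)$ and $\phi_\varepsilon\le 0\le u$ in $B_r(z)$, $\phi_\varepsilon$ is an admissible test function from below at $z$.

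Finally, I would compute $(-\Delta_p)^s(\phi_\varepsilon)_r(z)$ directly and split the integral into $B_r(z)$ and $B_r(z)^c$:
\begin{align*}
(-\Delta_p)^s(\phi_\varepsilon)_r(z)
&=\int_{B_r(z)}\frac{\varepsilon^{p-1}|y-z|^{\alpha(p-1)}}{|y-z|^{N+sp}}\,dy
-\int_{B_r(z)^c}\frac{u(y)^{p-1}}{|y-z|^{N+sp}}\,dy.
\end{align*}
The inner integral is finite thanks to $\alpha(p-1)>sp$ and is of order $O(\varepsilon^{p-1})$, while the outer integral is a strictly positive constant independent of $\varepsilon$, because $u>0$ a.e.\ on the positive-measure set $B_R(x_0)\setminus B_r(z)$. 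Hence, for $\varepsilon>0$ sufficiently small,
\[
(-\Delta_p)^s(\phi_\varepsilon)_r(z)<0=c(z)u(z)^{p-1},
\]
which contradicts the viscosity super-solution inequality at $z$. Therefore no such $z$ exists and $u>0$ throughout $B_R(x_0)$.

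The main obstacle I expect is the admissibility of the test function: for the singular-gradient regime $1<p\le 2/(2-s)$, one must thread the test function through the delicate hypothesis (VS3)(b) with an isolated critical point and the $C^2_\beta$ control, which is precisely why the power $\alpha$ must be pushed strictly above $sp/(p-1)$ (and above $2$ for $C^2$). Once the test function is chosen, the rest is a straightforward quantitative comparison between the local $\varepsilon$-small contribution and the uniformly positive nonlocal tail coming from the region where $u>0$.
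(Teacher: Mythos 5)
Your proposal follows essentially the same route as the paper: the a.e.\ dichotomy from Theorem~\ref{teo:mprinciple}, the viscosity characterization of Lemma~\ref{lema:visco}, the test function $-\varepsilon|x-x_\star|^{\beta}$ at a would-be zero, and the splitting of $(-\Delta_p)^s\phi^\varepsilon_r(x_\star)$ into the $O(\varepsilon^{p-1})$ local part and the nonlocal tail $\int_{B_r^c}u^{p-1}|y-x_\star|^{-N-sp}\,dy$. The only (immaterial) difference is that the paper sends $\varepsilon\to0$ to conclude $u=0$ a.e.\ off $B_r(x_\star)$ and hence everywhere, while you fix $\varepsilon$ small and contradict the strict positivity of the tail; your exponent condition $\alpha>\max\{2,sp/(p-1)\}$ is in fact the correct one for both the $C^2_\beta$ admissibility and the convergence of the local integral.
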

	\begin{proof}
		We will show that if there is $x_\star\in B_R=B_R(x_0)$
		such that $u(x_\star)=0$ then $u=0$ a.e. in $\R^N.$
		
		We start observing that, by Lemma \ref{lema:mprinc}, 
		$u\ge 0$ a.e. in $B_R.$ Moreover, by 
		Theorem \ref{teo:mprinciple},
		either $u>0$ a.e. in $B_R$ or $u=0$ a.e in $\R^N.$ 
	
		On the other hand, by Lemma \ref{lema:visco}, $u$ is a
		viscosity super-solution of \eqref{eq:ecCB}. Then, since
		$u\ge0$ in $B_R,$ for any $\varepsilon>0$ and 
		$\beta>\max\{2,\nicefrac2{2-s}\}$ the function
		\[
			\phi^\varepsilon=-\varepsilon|x-x_\star|^\beta
		\]
		is an admissible test function. Therefore
		\[
			(-\Delta_p)^s\phi^\varepsilon_r(x_{\star})\ge 
			c(x_{\star})\phi^\varepsilon_r(x_{\star})^{p-1}=0		
		\]
		for some $r\in (0,R-|x_{\star}-x_0|).$ See \eqref{ec:fir}
		for the definition of $\phi^\varepsilon_r.$   
		
		Then
		\begin{equation}\label{eq:auxposvisco}
			0\le
			\varepsilon^{p-1}\int_{B_r(x_\star)}
			|y-x_{\star}|^{\beta(p-1)-N-ps} dy-
			\int_{B_r(x_0)^c}	
			\dfrac{u(y)^{p-1}}{|y-x_{\star}|^{N+ps}}
			dy.
		\end{equation}
		Since $\beta>\max\{2,\nicefrac2{2-s}\},$ we get
		\[
			\varepsilon^{p-1}\int_{B_r(x_\star)}
			|y-x_{\star}|^{\beta(p-1)-N-ps} dy=
			\dfrac{\varepsilon^{p-1}\omega_N}{\beta(p-1)-ps}
			r^{\beta(p-1)-ps}\to0\quad\text{as }\varepsilon\to0,
		\]
		where $\omega_N$ denotes the volume of the unit ball in 
		$\R^N.$ Then, by \eqref{eq:auxposvisco}, we get
		$u=0$ a.e. in $B_r(x_\star)^c.$ 
		Therefore $u=0$ a.e. $\R^N.$
	\end{proof}
	
	Now we can prove Theorem \ref{teo:mprincipleVis}.
	
	\begin{proof}[Proof of Theorem \ref{teo:mprincipleVis}]
		By Theorem \ref{teo:mprinciple}, we have that 
		$u>0$ a.e. in $\Omega$ or $u=0$ $a.e$ in $\R^N.$ Suppose that
		there is $x_0\in\Omega$ such that $u(x_0)=0.$ Since 
		$\Omega$ is open, $c,u\in C(\Omega),$ there is $R>0$ such
		that $B_R(x_0)\subset\Omega$ and 
		$c,u\in C(\overline{B_R(x_0)}).$ Moreover, since $u$ is a weak
		super-solution of \eqref{eq:ecC}, we have that
		$u$ is a weak super-solution of 
		\[
			(-\Delta_p)^su=c(x)u^{p-1} 
			\quad \text{ in } B_R(x_0).
		\]
		Then, by Lemma \ref{lema:positivity}, $u=0$ a.e. in $\R^N$
		since $u(x_0)=0.$ Therefore $u>0$ in $\Omega$ or $u=0$ in 
		$\Omega.$
	\end{proof}	
	\begin{re}\label{re:notnecassump2}
		In the case $u\ge0$ a.e. in $\R^n,$ 
		the non-positivity assumption over 
		the function $c$ is not necessary, see
		Remark \ref{re:notnecassump}.
	\end{re}
\section{A Hopf lemma}\label{Hopf}
	First, we can show the Hopf's lemma in a ball.

	\begin{lem}\label{lema:HopfBola}
	 	Let $B$ be a ball in $\R^N$ of radius $R>0$, 
	 	$c,u\in C(\overline{B}),$ u 
		be a  weak super-solution of 
		\[
			(-\Delta_p)^su=c(x)u^{p-1} \quad \text{ in } B
		\]
		 and 
		$\delta(x)=\dist(x,B^c)$. 
		If $u\ge0$ a.e. in $\R^N$ 
		or $c(x)\le 0$ in $B$ and $u\ge0$ a.e. in $B^c$  
		then either $u\equiv0$ a.e. in $\R^N$ or 
		\begin{equation}\label{eq:Hopfbola}
				\liminf_{B\ni x\to x_0}\dfrac{u(x)}{\delta^s(x)}>0
		\end{equation}
		for all $x_0\in\partial B.$ 
	\end{lem}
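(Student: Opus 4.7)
The plan is to first reduce to the strict positivity of $u$ on $B$ via the strong minimum principle, then to construct a barrier from the $s$-power of the distance to $\partial B$ (which by Theorem \ref{teo:dist} satisfies a bounded right-hand side near $\partial B$) perturbed by a bulk bump supported inside $B$, and finally to conclude by the comparison Proposition \ref{pro:cp2}. By Theorem \ref{teo:mprincipleVis} applied in $B$, either $u\equiv 0$ a.e.\ in $\R^N$ or $u>0$ in $B$; we assume the latter. In the case where only $u\ge 0$ a.e.\ in $\R^N$ is given (without a sign assumption on $c$), Remark \ref{re:notnecassump} lets us replace $c$ by $c^-\le 0$, so we may assume $c\le 0$ in $B$.

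Let $\rho\in(0,R/2)$ be provided by Theorem \ref{teo:dist} applied to $B$, and set $A:=\{x\in B:\delta(x)<\rho\}$ and $K:=\{x\in B:\delta(x)\ge 2\rho\}$, so that $\delta^s$ is a weak solution of $(-\Delta_p)^s\delta^s=f$ in $A$ for some $f\in L^\infty(A)$. Choose $\phi\in C^\infty_c(B)$ with $\phi\ge 0$, $\phi\not\equiv 0$ and $\supp(\phi)\subset K$, so that $\dist(\supp(\phi),A)\ge\rho>0$ and $|x-y|\in[\rho,2R]$ for $(x,y)\in A\times \supp(\phi)$. For $\varepsilon>0$ define the candidate barrier
\[
v_\varepsilon:=\varepsilon\,\delta^s+\phi,
\]
extended by zero outside $B$. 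By positive homogeneity, $\varepsilon\delta^s$ is a weak solution of $(-\Delta_p)^s(\varepsilon\delta^s)=\varepsilon^{p-1}f$ in $A$, and Lemma \ref{lema:nonlocalbehavior} applied with its ``$v$'' equal to $\phi$ gives $(-\Delta_p)^s v_\varepsilon=\varepsilon^{p-1}f+h_\varepsilon$ in $A$, with
\[
h_\varepsilon(x)=2\int_K\frac{(\varepsilon(\delta^s(x)-\delta^s(y))-\phi(y))^{p-1}-(\varepsilon(\delta^s(x)-\delta^s(y)))^{p-1}}{|x-y|^{N+sp}}\,dy.
\]

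As $\varepsilon\to 0^+$, the integrand of $h_\varepsilon$ converges uniformly on $A\times K$ to $-\phi(y)^{p-1}/|x-y|^{N+sp}$ and is dominated by a constant independent of $\varepsilon$, so $h_\varepsilon$ converges uniformly on $A$ to $-2\int_K\phi(y)^{p-1}|x-y|^{-N-sp}\,dy$, which is bounded above by some $-H_0<0$. Since $\|f\|_{L^\infty(A)}<\infty$ and $|c(x)v_\varepsilon^{p-1}|=|c(x)|(\varepsilon\delta^s)^{p-1}\le\|c\|_\infty\varepsilon^{p-1}R^{s(p-1)}$ in $A$, choosing $\varepsilon$ sufficiently small yields $(-\Delta_p)^s v_\varepsilon\le c(x)v_\varepsilon^{p-1}$ in $A$, so $v_\varepsilon$ is a sub-solution of \eqref{eq:ecC} in $A$. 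Continuity and positivity of $u$ on the compact set $K$ give $u\ge m>0$ there; by fixing $\|\phi\|_\infty\le m/2$ first and then imposing also $\varepsilon R^s\le m/2$ we obtain $v_\varepsilon\le u$ on $K$, while trivially $v_\varepsilon=0\le u$ a.e.\ in $B^c$. Hence $v_\varepsilon\le u$ a.e.\ in $A^c$, and Proposition \ref{pro:cp2} gives $u\ge v_\varepsilon\ge\varepsilon\delta^s$ a.e.\ in $A$, whence $\liminf_{B\ni x\to x_0}u(x)/\delta(x)^s\ge\varepsilon>0$ for every $x_0\in\partial B$.

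The main technical obstacle is the uniform control of $h_\varepsilon$ on $A$: establishing the uniform convergence to the limit integral and a uniform strictly negative upper bound on that limit. Once this is in hand, the coordination of parameters (first fixing $\phi$ to set the scale of $H_0$, then shrinking $\varepsilon$ to secure both the sub-solution inequality and the pointwise bound $v_\varepsilon\le u$ on $K$) is routine.
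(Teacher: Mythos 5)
Your proof is correct and follows the same overall strategy as the paper: reduce to $u>0$ in $B$ via the strong minimum principle, build a barrier of the form ``$\varepsilon\delta^s$ plus an interior bump'' using Theorem \ref{teo:dist} and Lemma \ref{lema:nonlocalbehavior}, and conclude by comparison on the collar $\{\delta<\rho\}$. The differences are in the bookkeeping, and they are worth noting. The paper takes the bump to be $\alpha\chi_K$ with $K$ a closed ball, scales the \emph{whole} barrier by $\varepsilon$ (so the bump height is $\varepsilon\alpha$), tunes $\alpha$ large so that $f+h_\alpha\le\inf c\,u^{p-1}$, and then compares the nonlocal forms directly via Proposition \ref{pro:cp}; you instead keep a fixed smooth bump $\phi$, send only $\varepsilon\to0$, and exploit that $h_\varepsilon$ converges uniformly to a fixed strictly negative function while both $\varepsilon^{p-1}f$ and $c\,v_\varepsilon^{p-1}$ vanish, concluding via Proposition \ref{pro:cp2}. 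Your version is arguably cleaner: because the bump is not scaled with $\varepsilon$, the correction term does not get multiplied by $\varepsilon^{p-1}$, which sidesteps the delicate interaction between the choice of $\alpha$ and the subsequent choice of $\varepsilon$ in the paper's inequality $(-\Delta_p)^s(\varepsilon w)=\varepsilon^{p-1}(f+h_\alpha)\le c\,u^{p-1}$. One small slip to repair: to invoke the comparison principle you need $v_\varepsilon\le u$ on all of $A^c$, which includes the annulus $\{x\in B:\rho\le\delta(x)<2\rho\}$ and not only $K$; since $\phi$ vanishes there, it suffices to define $m\coloneqq\min\{u(x):x\in B,\ \delta(x)\ge\rho\}>0$ (a compact subset of $B$ on which $u$ is continuous and positive) so that the constraint $\varepsilon R^s\le m/2$ handles the annulus and $\|\phi\|_\infty\le m/2$ handles $K$. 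With that adjustment the argument is complete.
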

		 
	\begin{proof}
		By Theorem \ref{teo:mprincipleVis} and Remark
		\ref{re:notnecassump2}, we have 
		that either $u=0$ a.e in $\R^N$ or $u>0$ in $B.$ Suppose
		$u\not\equiv 0$ in $B,$ then 
		we want to show that \eqref{eq:Hopfbola} 
		holds true for all $x_0\in\partial B.$
		
		\medskip
		
		By Theorem \ref{teo:dist}, there exists 
		$\rho=\rho(N,p,s,B)>0$ such that $\delta^s$ 
		is a weak solution
		of $(-\Delta_p)^s\delta^s=f$ in 
		$B_\rho=\{x\in B\colon \delta(x)<\rho\}$ 
		for some $f\in L^{\infty}(B_\rho).$
		Let $K\subset\subset B_\rho^c\cap B$ be 
		a closed ball and $\alpha>0$ 
		be a constant (to be determined later).
		Owing to Lemma \ref{lema:nonlocalbehavior},  
		$w=\delta^s+\alpha \chi_K$ is a weak solution of 
		$(-\Delta_p)^sw=f+h_{\alpha}$ in $B_\rho$ where
		\[
			h_{\alpha}(x)=2\int_{K} 
			\dfrac{(\delta^s(x)-\delta^s(y)-\alpha)^{p-1}
			-(\delta^s(x)-\delta^s(y))^{p-1}}
			{|x-y|^{N+sp}}\, dxdy
		\]
		for a.e. $x\in B_\rho.$ Since 
		$u\in L^{\infty}(\overline{B})$ and 
		$\dist(K,B_{\rho})>0,$ it is clear that
		\[
			h_{\alpha}(x)\to-\infty \quad 
			\mbox{uniformly in } \overline{B_{\rho}} 
			\quad \mbox{ as }  
			\alpha \to \infty.
		\] 
		Then, we choose $\alpha$ large enough such that 
		\begin{equation}\label{eq:desaux1}
			\sup \{f(x)+ h_{\alpha}(x)\colon x\in B_{\rho} \}
			\le \inf\{c(x)(u(x))^{p-1}\colon x\in B_{\rho}\}.
		\end{equation}

		Let $\varepsilon\in(0,1)$ be such that 
		\[
			\varepsilon(R^s+\alpha)<\inf\{u(x)\colon x\in B, 
				\delta(x)\ge\rho\}.
		\] 
		Thus $v=\varepsilon w\le u$ in
		$B_{\rho}^c$ and using \eqref{eq:desaux1} we have that
		for all $\varphi\in\wwsp,$
		$\varphi\ge0$ 
		\begin{align*}
			\int_{\R^{2N}}
			&\dfrac{(v(x)-v(y))^{p-1}(\varphi(x)-\varphi(y))}
			{|x-y|^{N+sp}}dxdy\\&\le
			\int_{\R^{2N}}
			\dfrac{(u(x)-u(y))^{p-1}(\varphi(x)-\varphi(y))}
			{|x-y|^{N+sp}}dxdy.
		\end{align*}
		 Then, by Proposition \ref{pro:cp}, $v\le u$ in
		$B_\rho.$ Therefore
		\[
			\varepsilon\le\dfrac{u(x)}{\delta^s(x)}
			\quad \forall x\in B_\rho.
		\]
		Thus \eqref{eq:Hopfbola} holds true for all 
		$x_0\in\partial B.$
	\end{proof}	
	
	To conclude this section, we show our Hopf Lemma.
	
	\begin{proof}[Proof of Theorem \ref{teo:Hopf}]
		By Theorem \ref{teo:mprincipleVis} and Remark
		\ref{re:notnecassump2}, we have 
		that either $u=0$ a.e. in $\R^N$ or $u>0$ in $\Omega.$ Suppose
		$u\not\equiv 0$ in $\Omega.$ Since $\Omega$ 
		satisfies the interior ball condition in 
		$x_0\in\partial\Omega,$ there is a ball $B\subset\Omega$ 
		such that $x_0\in\partial B.$ Then, 
		by Lemma \ref{lema:HopfBola}, \eqref{eq:Hopf}
		holds true.
	\end{proof}

\subsection*{Aknowledgements}
	L. M. Del Pezzo was partially supported by  
	CONICET PIP 5478/1438  (Argentina) 
	and A. Quaas was partially supported 
	by Fondecyt grant No. 1110210 and Basal CMM UChile. 
\bibliographystyle{siam}
\bibliography{Biblio}

\def\cprime{$'$}
\begin{thebibliography}{10}

\bibitem{Adams}
{\sc R.~A. Adams}, {\em Sobolev spaces}, Academic Press [A subsidiary of
  Harcourt Brace Jovanovich, Publishers], New York-London, 1975.
\newblock Pure and Applied Mathematics, Vol. 65.

\bibitem{2016arXiv160701505B}
{\sc B.~{Barrios} and M.~{Medina}}, {\em Strong maximum principles for
  fractional elliptic and parabolic problems with mixed boundary conditions},
  ArXiv e-prints,  (2016).

\bibitem{MR1406564}
{\sc J.~Bertoin}, {\em L\'evy processes}, vol.~121 of Cambridge Tracts in
  Mathematics, Cambridge University Press, Cambridge, 1996.

\bibitem{BF}
{\sc L.~Brasco and G.~Franzina}, {\em Convexity properties of {D}irichlet
  integrals and {P}icone-type inequalities}, Kodai Math. J., 37 (2014),
  pp.~769--799.

\bibitem{brasco2014second}
{\sc L.~Brasco and E.~Parini}, {\em The second eigenvalue of the fractional
  {$p$}-{L}aplacian}, Adv. Calc. Var., 9 (2016), pp.~323--355.

\bibitem{MR3411543}
{\sc L.~Brasco, E.~Parini, and M.~Squassina}, {\em Stability of variational
  eigenvalues for the fractional {$p$}-{L}aplacian}, Discrete Contin. Dyn.
  Syst., 36 (2016), pp.~1813--1845.

\bibitem{Caffarelli2012}
{\sc L.~Caffarelli}, {\em Non-local Diffusions, Drifts and Games}, Springer
  Berlin Heidelberg, Berlin, Heidelberg, 2012, pp.~37--52.

\bibitem{MR2677613}
{\sc L.~A. Caffarelli, J.-M. Roquejoffre, and Y.~Sire}, {\em Variational
  problems for free boundaries for the fractional {L}aplacian}, J. Eur. Math.
  Soc. (JEMS), 12 (2010), pp.~1151--1179.

\bibitem{MR2042661}
{\sc R.~Cont and P.~Tankov}, {\em Financial modelling with jump processes},
  Chapman \& Hall/CRC Financial Mathematics Series, Chapman \& Hall/CRC, Boca
  Raton, FL, 2004.

\bibitem{DelPezzoQuaas}
{\sc L.~M. Del~Pezzo and A.~Quaas}, {\em Global bifurcation for fractional
  {$p$}-{L}aplacian and an application}, Z. Anal. Anwend., 35 (2016),
  pp.~411--447.

\bibitem{DD}
{\sc F.~Demengel and G.~Demengel}, {\em Functional spaces for the theory of
  elliptic partial differential equations}, Universitext, Springer, London,
  2012.
\newblock Translated from the 2007 French original by Reinie Ern{\'e}.

\bibitem{MR3542614}
{\sc A.~Di\, Castro, T.~Kuusi, and G.~Palatucci}, {\em Local behavior of
  fractional {$p$}-minimizers}, Ann. Inst. H. Poincar\'e Anal. Non Lin\'eaire,
  33 (2016), pp.~1279--1299.

\bibitem{DNPV}
{\sc E.~Di~Nezza, G.~Palatucci, and E.~Valdinoci}, {\em Hitchhiker's guide to
  the fractional {S}obolev spaces}, Bull. Sci. Math., 136 (2012), pp.~521--573.

\bibitem{MR0521262}
{\sc G.~Duvaut and J.-L. Lions}, {\em Inequalities in mechanics and physics},
  Springer-Verlag, Berlin-New York, 1976.
\newblock Translated from the French by C. W. John, Grundlehren der
  Mathematischen Wissenschaften, 219.

\bibitem{MR3395749}
{\sc M.~M. Fall and S.~Jarohs}, {\em Overdetermined problems with fractional
  {L}aplacian}, ESAIM Control Optim. Calc. Var., 21 (2015), pp.~924--938.

\bibitem{MR2480109}
{\sc G.~Gilboa and S.~Osher}, {\em Nonlocal operators with applications to
  image processing}, Multiscale Model. Simul., 7 (2008), pp.~1005--1028.

\bibitem{Greco2014HopfsLA}
{\sc A.~Greco and R.~Servadei}, {\em Hopf's lemma and constrained radial
  symmetry for the fractional {L}aplacian}, Math. Res. Lett., 23 (2016),
  pp.~863--885.

\bibitem{Grisvard}
{\sc P.~Grisvard}, {\em Elliptic problems in nonsmooth domains}, vol.~24 of
  Monographs and Studies in Mathematics, Pitman (Advanced Publishing Program),
  Boston, MA, 1985.

\bibitem{MR3483598}
{\sc A.~Iannizzotto, S.~Liu, K.~Perera, and M.~Squassina}, {\em Existence
  results for fractional {$p$}-{L}aplacian problems via {M}orse theory}, Adv.
  Calc. Var., 9 (2016), pp.~101--125.

\bibitem{Iannizzotto2015}
{\sc A.~Iannizzotto, S.~Mosconi, and M.~Squassina}, {\em $h^s$ versus
  $c^0$-weighted minimizers}, Nonlinear Differential Equations and Applications
  NoDEA, 22 (2015), pp.~477--497.

\bibitem{IMS}
\leavevmode\vrule height 2pt depth -1.6pt width 23pt, {\em Global {H}\"older
  regularity for the fractional {$p$}-{L}aplacian}, Rev. Mat. Iberoam., 32
  (2016), pp.~1353--1392.

\bibitem{KKL}
{\sc J.~{Korvenp{\"a}{\"a}}, T.~{Kuusi}, and E.~{Lindgren}}, {\em {Equivalence
  of solutions to fractional $p$-Laplace type equations}}.
\newblock To appear in Journal de Mathématiques Pures et Appliqu\'ees.

\bibitem{LL}
{\sc E.~Lindgren and P.~Lindqvist}, {\em Fractional eigenvalues}, Calc. Var.
  Partial Differential Equations, 49 (2014), pp.~795--826.

\bibitem{MR3491533}
{\sc J.~M. Maz{\'o}n, J.~D. Rossi, and J.~Toledo}, {\em Fractional
  {$p$}-{L}aplacian evolution equations}, J. Math. Pures Appl. (9), 105 (2016),
  pp.~810--844.

\bibitem{MR3530213}
{\sc S.~Mosconi, K.~Perera, M.~Squassina, and Y.~Yang}, {\em The
  {B}rezis--{N}irenberg problem for the fractional p-{L}aplacian}, Calc. Var.
  Partial Differential Equations, 55 (2016), p.~55:105.

\bibitem{MR3474405}
{\sc S.~Mosconi and M.~Squassina}, {\em Nonlocal problems at nearly critical
  growth}, Nonlinear Anal., 136 (2016), pp.~84--101.

\bibitem{MR2025185}
{\sc P.~Pucci and J.~Serrin}, {\em The strong maximum principle revisited}, J.
  Differential Equations, 196 (2004), pp.~1--66.

\bibitem{MR2356201}
\leavevmode\vrule height 2pt depth -1.6pt width 23pt, {\em The maximum
  principle}, Progress in Nonlinear Differential Equations and their
  Applications, 73, Birkh\"auser Verlag, Basel, 2007.

\bibitem{MR3447732}
{\sc X.~Ros-Oton}, {\em Nonlocal elliptic equations in bounded domains: a
  survey}, Publ. Mat., 60 (2016), pp.~3--26.

\bibitem{RS}
{\sc X.~Ros-Oton and J.~Serra}, {\em The {D}irichlet problem for the fractional
  {L}aplacian: regularity up to the boundary}, J. Math. Pures Appl. (9), 101
  (2014), pp.~275--302.

\bibitem{MR3456825}
{\sc J.~L. V{\'a}zquez}, {\em The {D}irichlet problem for the fractional
  {$p$}-{L}aplacian evolution equation}, J. Differential Equations, 260 (2016),
  pp.~6038--6056.

\end{thebibliography}

\end{document}